\documentclass[12pt]{article}
\usepackage{amsmath, amsthm, amssymb,latexsym,enumerate}
\usepackage{graphicx}
\usepackage{hyperref}
\usepackage{xcolor}
\usepackage{todonotes}
\usepackage{cite}
\usepackage{dsfont}
\usepackage{authblk}
\usepackage{comment}
\usepackage{hyperref}
\usepackage{tikz}
\usetikzlibrary{calc}
\hypersetup{
  pdftitle={Independent domination of graphs with bounded maximum degree},
  pdfauthor={Eun-Kyung Cho, Jinha Kim, Minki Kim, and Sang-il Oum}}

\newcommand\fl[1]{\lfloor #1\rfloor}

\newcommand\abs[1]{\lvert #1\rvert}
\newtheorem{theorem}{Theorem}[section]
\newtheorem{lemma}[theorem]{Lemma}
\newtheorem{proposition}[theorem]{Proposition}
\newtheorem{corollary}[theorem]{Corollary}
\newtheorem{claim}[theorem]{Claim}

\numberwithin{equation}{section}
\newenvironment{subproof}[1][\proofname]{%
  \begin{proof}[#1]%
}{%
  \end{proof}%
}

\title{Independent domination of graphs with bounded maximum degree}
\author{Eun-Kyung Cho\thanks{Supported by Basic Science Research Program through the National Research Foundation of Korea (NRF) funded by the Ministry of Education (NRF-2020R1I1A1A0105858711).}}
\affil[1]{\small Department of Mathematics, Hankuk University of Foreign Studies, Yongin,~South~Korea.}
\author[2]{Jinha Kim\thanks{Supported by the Institute for Basic Science (IBS-R029-C1).}}
\affil[2]{\small Discrete Mathematics Group, Institute~for~Basic~Science~(IBS), Daejeon,~South~Korea.}
\author[$\dagger$3]{Minki Kim\thanks{Supported by Basic Science Research Program through the National Research Foundation of Korea (NRF) funded by the Ministry of Education (NRF-2022R1F1A1063424) and by GIST Research Project grant funded by the GIST in 2022.}}
\affil[3]{\small Division of Liberal Arts and Sciences, GIST, Gwangju, South Korea.}
\author[$\dagger$2,4]{Sang-il~Oum}
\affil[4]{\small Department of Mathematical Sciences, KAIST,  Daejeon,~South~Korea.}
\affil[ ]{E-mail address: \texttt{ekcho2020@gmail.com}, \texttt{jinhakim@ibs.re.kr}, \texttt{minkikim@gist.ac.kr}, \texttt{sangil@ibs.re.kr}}
\date\today

\begin{document}

\maketitle
\begin{abstract}
    An independent dominating set of a graph, also known as a maximal independent set, 
    is a set $S$ of pairwise non-adjacent vertices such that every vertex not in $S$ is adjacent to some vertex in $S$.
    We prove that 
    for $\Delta=4$ or $\Delta\ge 6$, every connected $n$-vertex graph of maximum degree at most $\Delta$ has an independent dominating set of size at most $(1-\frac{\Delta}{
        \lfloor\Delta^2/4\rfloor+\Delta
    })(n-1)+1$. In addition, we characterize all connected graphs having the equality
    and we show that other connected graphs have an independent dominating set of size at most $(1-\frac{\Delta}{
        \lfloor\Delta^2/4\rfloor+\Delta
    })n$.
\end{abstract}

\section{Introduction}
All graphs in this paper are simple, meaning that they have no loops and no parallel edges.
For a graph $G$, we write $V(G)$ for its vertex set and $E(G)$ for its edge set. 
For $v \in V(G)$, we use $d_G(v)$ to denote the degree of~$v$ in $G$.
For $v \in V(G)$, we use $N_G(v)$ to denote the set of neighbors of~$v$ in $G$, and $N_G[v] = N_G(v) \cup \{v\}$.
For $X \in V(G)$, we use $G-X$ to denote the subgraph of~$G$ induced by $V(G) \setminus X$.
We write $\Delta(G)$ to denote the maximum degree of vertices in a graph $G$.
A vertex of degree $0$ is called \emph{isolated}.

An \emph{independent dominating set} of a graph is a set $S$ of vertices that are pairwise non-adjacent, which is called \emph{independent}, and every vertex not in $S$ has a neighbor in $S$, which is called \emph{dominating}.
Independent dominating sets appear as early as 1962
in Berge~\cite[Chapter 5]{Berge1962} and Ore~\cite[Chapter 13]{ore1962theory} under the names \emph{kernels} and maximal independent sets, respectively.
It is easy to see that independent dominating sets are precisely 
maximal independent sets, which is stated in both Berge~\cite[Chapter 5]{Berge1962} and Ore~\cite[Chapter 13]{ore1962theory}.
We write $i(G)$ to denote the size of the minimum independent dominating set in a graph $G$.
Haynes, Hedetniemi, and Slater discussed the relation between the domination parameters and maximum degree in their book~\cite[Subsection 9.3.2]{HHS1998}.

We are interested in bounding $i(G)$ in terms of the number of vertices in a graph~$G$ of maximum degree at most $\Delta$.
It is trivial to see the inequality \[ i(G)\le \abs{V(G)}-\Delta\] 
because a maximal independent set containing a vertex of maximum degree has at most $\abs{V(G)}-\Delta$ vertices. 
Domke, Dunbar, and Markus~\cite{DDM1997} showed the above inequality 
and 
determined connected bipartite graphs and trees having the equality.
As a refinement of the trivial inequality, Blidia, Chellali, and Maffray \cite{blidia2006extremal} showed that $i(G)\le \abs{V(G)}-\Delta'(G)$, where $\Delta'(G)=\max_{v\in V(G)}(d_G(v)+\nu(G-N_G[v]))$
and $\nu(G-N_G[v])$ is the size of a largest matching in $G-N_G[v]$.

Akbari~et~al.~\cite{AADHHN2020} 
showed that
$i(G)\le \frac{1}{2}\abs{V(G)}$
for graphs $G$ of maximum degree at most $3$ with no isolated vertex
and characterized all graphs achieving the equality.
Cho, Choi, and Park~\cite[Theorem 1.5]{cho2021independent} proved that 
$i(G)\le \frac{5}{9}\abs{V(G)}$
for graphs $G$ of maximum degree at most $4$ with no isolated vertices.

Our main theorem provides a tight upper bound of $i(G)$ 
in terms of $\abs{V(G)}$ 
for graphs $G$ of maximum degree at most $\Delta$.

\begin{theorem}\label{thm:form1}
    Let $\Delta\ge 4$ be an integer and 
    let $G$ be a connected graph of maximum degree at most $\Delta$.
    \begin{itemize}
        \item If $\Delta\neq 5$, then $G$
        has an independent dominating set of size at most 
        $\bigl(1-\frac{\Delta}{\lfloor \Delta^2/4\rfloor+\Delta}\bigr) (\abs{V(G)}-1)+1$.
        \item If $\Delta=5$, then $G$
        has an independent dominating set of size at most 
        $\max\left(\frac{5}{9}\abs{V(G)}, \bigl(1-\frac{\Delta}{\lfloor \Delta^2/4\rfloor+\Delta}\bigr) (\abs{V(G)}-1)+1\right)$.
    \end{itemize}
\end{theorem}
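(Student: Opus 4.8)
The plan is to deduce Theorem~\ref{thm:form1} from a single \emph{Key Lemma} proved by strong induction on $\abs{V(G)}$. Write $\epsilon = \frac{\Delta}{\lfloor\Delta^2/4\rfloor+\Delta}$ and $c = 1-\epsilon = \frac{\lfloor\Delta^2/4\rfloor}{\lfloor\Delta^2/4\rfloor+\Delta}$, so that the desired inequality $i(G) \le c(\abs{V(G)}-1)+1$ is the same as $i(G) \le c\abs{V(G)} + \epsilon$; in this additive form the quantity $i(G)-c\abs{V(G)}$ is additive over connected components, so a graph with $p$ components satisfies $i(G)-c\abs{V(G)} \le p\,\epsilon$ as soon as each component does (single vertices meet this with equality, since $c+\epsilon=1$). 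To make the induction self-supporting when graphs are glued along a vertex, the Key Lemma should be a \emph{rooted} two-sided statement: for every connected $G$ with $\Delta(G)\le\Delta$ and every $r\in V(G)$ there exist maximal independent sets $S^{\mathrm{in}} \ni r$ and $S^{\mathrm{out}} \not\ni r$ with $\abs{S^{\mathrm{in}}}$ and $\abs{S^{\mathrm{out}}}$ bounded by $c\abs{V(G)}+\epsilon$, possibly with a slightly smaller intercept in one of the two cases — which is what one actually needs at a cut-vertex.

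The workhorse of the induction is \emph{ball deletion}: choose an independent set $I$ that dominates $N_G[u]$ for a well-chosen vertex $u$ (so $u\in I$ or $I\cap N_G(u)\neq\emptyset$), commit $I$ to the independent set, delete $N_G[I]$, and recurse on the rooted components of $G-N_G[I]$. Since $N_G(I)$ is disjoint from $I$, a short computation with the inductive hypothesis shows this produces a maximal independent set within the target size whenever $\lfloor\Delta^2/4\rfloor\cdot\abs{N_G(I)} \ge (\abs{I}+q-1)\,\Delta$, where $q$ is the number of components of $G-N_G[I]$; in particular, taking $I=\{u\}$ for a vertex $u$ of degree $\Delta$, it suffices that $G-N_G[u]$ has at most $\lfloor\Delta^2/4\rfloor$ components. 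So the routine part of the argument is a case analysis that first disposes of low-degree vertices, pendant structures, short induced cycles, and cut-vertices using the rooted induction hypothesis, reducing to the situation where $G$ is $2$-connected and essentially $\Delta$-regular near the vertex one wishes to delete.

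The main obstacle is the \emph{bad case}: a vertex $u$ of degree $\Delta$ for which every reasonable choice of $I$ leaves $G-N_G[I]$ with too many components. The point is that an abundance of components after deleting $N_G[u]$ forces $G[N_G(u)]$ to be sparse, which in turn forces $N_G(u)$ to be dominated by an independent set $I$ of controlled size, while simultaneously forcing the vertices of $I$ to have many private neighbours outside $N_G[u]$; balancing "number of dominators in $I$" against "number of private neighbours of $I$" is exactly the optimization whose extremum is $\lceil\Delta/2\rceil\lfloor\Delta/2\rfloor = \lfloor\Delta^2/4\rfloor$, so the inequality $\lfloor\Delta^2/4\rfloor\cdot\abs{N_G(I)} \ge (\abs{I}+q-1)\Delta$ becomes tight precisely on the would-be extremal configurations. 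When even this fails, $G$ must be small or must coincide, up to the gluing operation, with one of the extremal graphs, and those are checked directly. I expect this structural dichotomy — and carrying it out uniformly in $\Delta$ — to be the technical heart of the argument. The $\Delta=5$ exception enters because for small $5$-bounded graphs the clean bound $c(\abs{V(G)}-1)+1$ can be violated, and there one retreats to the Cho--Choi--Park bound $\tfrac59\abs{V(G)}$; for $\Delta=4$ the finitely many small cases all obey the clean bound, and for $\Delta\ge 6$ the slack $\epsilon$ is large enough to absorb them. The equality characterization then follows by tracking exactly when each inequality used in the induction is tight.
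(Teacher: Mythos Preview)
Your sketch gets the arithmetic right (the additive form $i(G)\le c\abs{V(G)}+\epsilon$ and the $\lfloor\Delta/2\rfloor\lceil\Delta/2\rceil$ balance), but the structural plan has real gaps. First, the rooted Key Lemma is false as stated: for $G=K_{1,\Delta}$ with root $r$ the centre, the only maximal independent set avoiding $r$ is the leaf set of size $\Delta$, which already for $\Delta=4$ exceeds $c(\Delta+1)+\epsilon=3$. Second, reducing to ``$2$-connected, essentially $\Delta$-regular'' points in the wrong direction: the extremal graphs are cliques (or, for $\Delta=4$, odd cycles) with many pendants attached, so they are maximally far from $2$-connected, and stripping pendants just reproduces the original problem with extra bookkeeping. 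Third, charging $+\epsilon$ to \emph{every} component of $G-N_G[I]$ is too expensive; even on the extremal $H(\lfloor\Delta/2\rfloor+1,\lceil\Delta/2\rceil)$, ball deletion at a clique vertex produces exactly $\lfloor\Delta^2/4\rfloor$ isolated components, leaving zero slack to absorb anything non-extremal.

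The paper's route differs in two essential ways that are not visible in your sketch. It proves a stronger inductive statement in which the $+\epsilon$ is charged only to \emph{$\Delta$-special} components (a class defined to contain isolated vertices and the extremal configurations), so that after deleting $N_G[v]$ the quantity to control is the number of isolated vertices, not the number of all components. And it uses two tools you do not mention: an edge-deletion monotonicity lemma (delete all but one edge at a vertex of degree $\ge2$; then $i$ does not decrease), which disposes of vertices with no pendant neighbour without any rooted hypothesis; and Brooks' theorem, applied to the $(\Delta-d_1)$-regular core that remains once every high-degree vertex has the same number $d_1$ of pendant neighbours. Finally, for $\Delta=5$ you cannot ``retreat to the Cho--Choi--Park bound $\tfrac59\abs{V(G)}$'': their result is for $\Delta=4$; the $\tfrac59$ bound for $\Delta=5$ is established here, inside the same induction with $t=\tfrac49$.
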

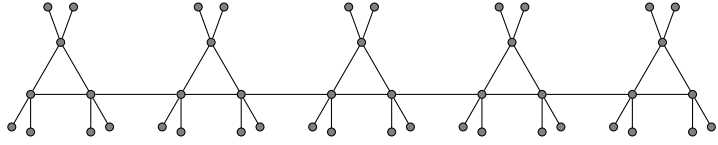
\begin{figure}
    \begin{center}
    \begin{tikzpicture}
        \tikzstyle{v}=[circle,draw,fill=black!50,inner sep=0pt,minimum width=3pt]
        \foreach \i in {0,1,2,3,4} {
            \draw (2*\i,0) node[v] (v\i0){} --+ (0:.8) node[v](v\i1){} --+ (60:.8) node[v](v\i2){} --(v\i0);
            \draw (v\i0)--+ (240:.5) node[v]{};
            \draw (v\i0)--+ (270:.5) node[v]{};
            \draw (v\i1)--+ (270:.5) node[v]{};
            \draw (v\i1)--+ (300:.5) node[v]{};
            \draw (v\i2)--+ (90-20:.5) node[v]{};
            \draw (v\i2)--+ (90+20:.5) node[v]{};
        }
        \draw (v01)--(v10);
        \draw (v11)--(v20);
        \draw (v21)--(v30);
        \draw (v31)--(v40);
    \end{tikzpicture}
    \end{center}    
    \caption{A connected graph $G$ satisfying $i(G)=\frac{5}{9}\abs{V(G)}$ for $\Delta=5$.}\label{fig:ex}
\end{figure}

In addition, we characterize all connected graphs achieving the equality of Theorem~\ref{thm:form1} when $\Delta\neq5$. They will be called $\Delta$-special graphs. 
For an integer $\Delta\ge4$, 
a graph is \emph{$\Delta$-special} if  it is isomorphic to one of the following.
\begin{enumerate}[(a)]
    \item $K_1$.
    \item A graph obtained from $K_{\lceil \Delta/2\rceil+1}$ or $K_{\lfloor \Delta/2\rfloor+1}$ 
by attaching degree-$1$ vertices so that each vertex in the initial clique has degree $\Delta$.
    \item A graph obtained from an odd cycle by attaching two vertices of degree~$1$ at each vertex when $\Delta=4$.
    \item A graph obtained from the union of two $\Delta$-special graphs $G_1$ and $G_2$ with $\abs{V(G_1)\cap V(G_2)}=1$
    where the vertex in the intersection does not belong to any cycles in $G_1$ or $G_2$.
\end{enumerate}
A precise and equivalent definition will be discussed in Section~\ref{sec:special}.

Now we present a stronger theorem, implying Theorem~\ref{thm:form1}.
\begin{theorem}\label{thm:form2}
    Let $\Delta\ge4$ be an integer and let $G$ be a connected graph of maximum degree at most $\Delta$.
\begin{enumerate}[(i)]
    \item     If $G$ is $\Delta$-special, then 
    $i(G)=\left(1-\frac{\Delta}{\lfloor \Delta^2/4\rfloor+\Delta}\right) \abs{V(G)}+\frac{\Delta}{\lfloor \Delta^2/4\rfloor+\Delta}$.
    \item If $G$ is not $\Delta$-special and $\Delta\neq 5$, then 
    $i(G)\le \left(1-\frac{\Delta}{\lfloor \Delta^2/4\rfloor+\Delta}\right) \abs{V(G)}$.
    \item If $G$ is not $\Delta$-special and $\Delta=5$, then 
    $i(G)\le \frac{5}{9}\abs{V(G)}$.
\end{enumerate}
\end{theorem}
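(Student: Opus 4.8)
Throughout write $m=\lfloor\Delta^2/4\rfloor$ and $c=\frac{\Delta}{m+\Delta}$, so the target is $i(G)\le(1-c)\abs{V(G)}$, with an extra additive $c$ in the $\Delta$-special case and with $1-c$ replaced by $\tfrac59$ when $\Delta=5$. The plan is to prove all three parts together by strong induction on $\abs{V(G)}$. For part~(i) the upper bound is explicit: in a $\Delta$-special graph, following the recursive clause~(d), choose one vertex from each clique produced by clause~(b) or~(c) (and the single vertex of a $K_1$), put all the attached degree-one vertices elsewhere into the set, and verify that this is a maximal independent set of the claimed size. For the matching lower bound I would induct along the same recursive description, but in order for clause~(d) to compose I would carry three parameters of a graph $H$ with a distinguished \emph{gluing vertex} $v$: the minimum size of an independent dominating set of $H$ containing $v$, the minimum size of one avoiding $v$, and the minimum size of an independent set avoiding $v$ that dominates $V(H)\setminus\{v\}$. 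For a $\Delta$-special $H$ and an admissible $v$ these should each equal $i(H)$ or $i(H)-1$ in a pattern consistent with the formula, and this is exactly the data consumed in the gluing step below.

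For parts~(ii) and~(iii) I would first reduce to graphs with no cut vertex; small graphs and disconnected pieces are immediate, with $K_1$ covered by clause~(a). If $G$ has a cut vertex, pick a leaf block $B$, let $v$ be its unique cut vertex of $G$, and write $G=B\cup G'$ with $V(B)\cap V(G')=\{v\}$. Gluing compatible sets at $v$ expresses $i(G)$ as the minimum of a few sums of the three parameters above evaluated on $B$ and $G'$; feeding in the induction hypothesis and doing the arithmetic gives the bound, and chasing the equality case forces both $B$ and $G'$ to be $\Delta$-special and glued exactly as in clause~(d). Hence a non-$\Delta$-special $G$ with a cut vertex has slack to spare, and this step also disposes of pendant vertices and other low-degree appendages.

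The heart of the proof is the $2$-connected case. Here I would take a vertex $v$ of maximum degree, put $v$ into the set, delete $N_G[v]$, and recurse on the components $C_1,\dots,C_k$ of $G-N_G[v]$, so that $i(G)\le 1+\sum_j i(C_j)$. By induction each $C_j$ contributes $(1-c)\abs{V(C_j)}$ plus an extra $c$ exactly when $C_j$ is $\Delta$-special; since $\sum_j\abs{V(C_j)}=\abs{V(G)}-1-d_G(v)$, the bound $(1-c)\abs{V(G)}$ follows once $d_G(v)\ge\frac{\Delta}{m}(1+s)$, where $s$ is the number of $\Delta$-special components. Crude bounds on $k$ (and hence $s$) are far too weak for this, so one must use structure: in a $2$-connected graph each $C_j$ meets $N_G(v)$ in at least two vertices, and a $\Delta$-special $C_j$ meets it only in vertices of small degree in $C_j$, since its high-degree vertices already use up their degree inside $C_j$ and so have no neighbor in $N_G(v)$. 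Combining this with the degree budget available at $N_G(v)$, I would either confirm that $d_G(v)$ is large enough, or, for each $\Delta$-special component $C_j$, replace the naive recursion on $C_j$ by a local move — add to the set a small-degree vertex $w\in C_j$ adjacent to $N_G(v)$ and recurse instead on $C_j-N_{C_j}[w]$, which is no longer $\Delta$-special and so no longer costs the extra $c$ — doing the bookkeeping across all special components simultaneously. The tight cases are then to be pinned down as the $\Delta$-special graphs when $\Delta\ne5$; the different examples in Figure~\ref{fig:ex} explain why $\Delta=5$ is excluded from this characterization and why the induction there is run against the weaker target $\tfrac59\abs{V(G)}$.

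I expect this last part to be the real obstacle: simultaneously paying for the single vertex $v$ and for the accumulated additive $c$'s coming from all $\Delta$-special components of $G-N_G[v]$, with a choice of $v$ (and of the fallback local move) robust enough that the arithmetic never fails, and then showing that saturation forces $G$ itself to be $\Delta$-special. This requires a precise understanding of how $\Delta$-special graphs can sit as components after deleting a maximum-degree closed neighbourhood; the case $\Delta=4$ (where clause~(c) contributes extra odd-cycle-based extremal graphs) and the case $\Delta=5$ (where there is no clean characterization) are where the analysis is most delicate, and a clean statement of the decisive local step is presumably the role of the Key Lemma.
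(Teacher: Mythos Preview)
Your overall architecture is plausible but diverges sharply from the paper's, and the place where you yourself flag ``the real obstacle'' is a genuine gap rather than a detail to be filled in.

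The paper never reduces to $2$-connected graphs. Instead it inducts on $\abs{V(G)}+\abs{E(G)}$ and relies on an edge-deletion inequality (Lemma~\ref{lem:deg}): if $w$ has degree at least $2$ and $H'$ is obtained from $G$ by deleting all but one edge at $w$, then $i(G)\le i(H')$. This is used in Claim~\ref{clm:degree1} to dispose of any vertex of degree $\ge2$ with no degree-$1$ neighbour, and simultaneously to show that $G-N_G[v]$ has no non-trivial special component. After that reduction, every isolated vertex of $G-N_G[v]$ is a degree-$1$ pendant of some vertex in $N_G(v)$, and the whole problem becomes the arithmetic of Lemmas~\ref{lem:select} and~\ref{lem:select2}: choose $v$ so that $(n_0(G-N_G[v])+1)/d_G(v)\le\lfloor\Delta^2/4\rfloor/\Delta$. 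The residual regular case is finished by Brooks' theorem (Lemma~\ref{lem:all-d1}).

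In your plan the same quantity $s=n_\Delta(G-N_G[v])$ has to be controlled, but you have no mechanism for the trivial special components. Your ``local move'' --- pick $w\in C_j$ of small $C_j$-degree and recurse on $C_j-N_{C_j}[w]$ --- does nothing when $C_j\cong K_1$: the move is exactly what the naive recursion already did, costing $1$. And $2$-connectivity alone only yields $s\le d_G(v)(\Delta-1)/2$, which for $\Delta=4$ gives $s\le6$ while you need $s\le\lfloor\Delta^2/4\rfloor-1=3$. One can build $2$-connected examples (e.g.\ $v$ of degree $4$, each neighbour of $v$ adjacent to three further vertices of degree~$2$, arranged so that $G-N_G[v]$ has six isolated vertices) where the single recursion at $v$ overshoots the target, so a different $v$ or a different move is forced; you give no rule for this. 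The paper sidesteps the issue entirely because after the edge-deletion reduction such configurations cannot occur: a vertex with all neighbours of degree $\ge2$ has already been handled.

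A secondary issue is that your block decomposition needs a strengthened induction hypothesis (bounds on all three of your parameters $a_1,a_2,a_3$, not just on $i$), and you only sketch what these should be for special graphs; for non-special pieces the bookkeeping is left open. The paper avoids this by never splitting at cut vertices.
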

There is an infinite family of non-special connected graphs achieving the bound of Theorem~\ref{thm:form2}. 
When $\Delta\neq 5$, let $G$ be a graph obtained from any non-trivial $\Delta$-special graph by removing one vertex of degree $1$. It is straightforward to verify that 
$i(G)=\left(1-\frac{\Delta}{\lfloor \Delta^2/4\rfloor+\Delta}\right)\abs{V(G)}$.

For $\Delta=5$, let $G$ be a graph obtained from a path $v_1v_2\ldots v_{3k}$ on $3k$ vertices by adding $k$ edges $v_{3i-2}v_{3i}$ for all $i=1,2,\ldots,k$
and attaching $2$ vertices of degree $1$ to each vertex on the path, see Figure~\ref{fig:ex}.
Let $S$ be a set consisting of $v_3$, $v_6$, $\ldots$, $v_{3k}$ and all vertices of degree $1$ adjacent to $v_{i}$ for some $i\not\equiv 0\pmod3$.
Then it is easy to verify that $S$ is a minimum independent dominating set of $G$
and $i(G)=\abs{S}=5k$ and therefore $i(G)=\frac{5}{9}\abs{V(G)}$.

Theorem~\ref{thm:form2} proves the following corollary, which was conjectured by Cho, Choi, and Park~\cite{cho2021independent}. They claimed to have verified the conjecture for $\Delta\le 8$ and presented its proof for $\Delta=4$.
Let $H(p, q)$ be the graph obtained by attaching $q$~vertices of degree $1$ to every vertex of $K_p$.
\begin{corollary}\label{cor:main}
    Let $\Delta$ be a positive integer.
    Every graph $G$ with maximum degree at most $\Delta$ and no isolated vertices
    has an independent dominating set having at most 
    $(1-\frac{\Delta}{
        \lfloor(\Delta+2)^2/4\rfloor
    }) \abs{V(G)}$ vertices.
    Furthermore, when $\Delta\ge 4$, 
    the equality holds if and only if 
    every component is isomorphic to $H(\lceil \Delta/2\rceil+1, \lfloor \Delta/2\rfloor)$
    or $H(\lfloor \Delta/2\rfloor+1,\lceil \Delta/2\rceil)$.
\end{corollary}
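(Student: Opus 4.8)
The plan is to derive Corollary~\ref{cor:main} from Theorem~\ref{thm:form2} by a straightforward case analysis on the components of $G$, after first reconciling the two slightly different algebraic forms of the bound. I would begin with the elementary identity $\lfloor(\Delta+2)^2/4\rfloor = \lfloor\Delta^2/4\rfloor+\Delta+1$, valid for every integer $\Delta\ge0$ (check the two parities of $\Delta$ separately), so that $1-\frac{\Delta}{\lfloor(\Delta+2)^2/4\rfloor} = 1-\frac{\Delta}{\lfloor\Delta^2/4\rfloor+\Delta+1}$. Call this coefficient $c_\Delta$, and write $b_\Delta = 1-\frac{\Delta}{\lfloor\Delta^2/4\rfloor+\Delta}$ for the coefficient appearing in Theorem~\ref{thm:form2}; note $c_\Delta < b_\Delta$. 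The small cases $\Delta\in\{1,2,3\}$ should be disposed of first: for $\Delta=1$ the bound reads $i(G)\le\frac12\abs{V(G)}$ and $G$ is a disjoint union of edges, for which equality always holds (and there is no "furthermore" clause claim since the hypothesis is $\Delta\ge4$); for $\Delta\in\{2,3\}$ one invokes or re-derives the easy bounds ($\Delta=3$ is the Akbari et al.\ result $i(G)\le\frac12\abs{V(G)}$ quoted in the introduction, and $c_2=c_3=\frac12$ as well, while $\Delta=2$ is a trivial cycle/path computation).

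For the main range $\Delta\ge4$, since $i(G)$ and $\abs{V(G)}$ are both additive over components and $G$ has no isolated vertex, it suffices to prove the per-component inequality $i(H)\le c_\Delta\abs{V(H)}$ for every connected graph $H$ with $2\le\abs{V(H)}$ and $\Delta(H)\le\Delta$, with equality iff $H\cong H(\lceil\Delta/2\rceil+1,\lfloor\Delta/2\rfloor)$ or $H\cong H(\lfloor\Delta/2\rfloor+1,\lceil\Delta/2\rceil)$. For a fixed component $H$ on $m\ge2$ vertices, Theorem~\ref{thm:form2} gives (for $\Delta\ne5$) either $i(H) = b_\Delta m + \frac{\Delta}{\lfloor\Delta^2/4\rfloor+\Delta}$ when $H$ is $\Delta$-special, or $i(H)\le b_\Delta m$ otherwise. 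In the non-special case $b_\Delta m = c_\Delta m + \frac{\Delta}{\lfloor\Delta^2/4\rfloor+\Delta}\cdot\frac{m}{\lfloor\Delta^2/4\rfloor+\Delta+1}\cdot(\lfloor\Delta^2/4\rfloor+\Delta+1)/(\lfloor\Delta^2/4\rfloor+\Delta)$ — more cleanly, $b_\Delta m - c_\Delta m = \bigl(\frac{\Delta}{\lfloor\Delta^2/4\rfloor+\Delta+1}-\frac{\Delta}{\lfloor\Delta^2/4\rfloor+\Delta}\bigr)(-m)$, which is $\frac{\Delta m}{(\lfloor\Delta^2/4\rfloor+\Delta)(\lfloor\Delta^2/4\rfloor+\Delta+1)}>0$; so $b_\Delta m < c_\Delta m$ is false — I need to be careful here. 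The correct comparison: $c_\Delta m = b_\Delta m - \delta m$ where $\delta = b_\Delta - c_\Delta > 0$, hence $b_\Delta m = c_\Delta m + \delta m$, so a non-special $H$ only satisfies $i(H)\le b_\Delta m = c_\Delta m + \delta m$, which is \emph{weaker} than what I want. This means the naive reduction does not work directly, and the real content is to show that the extremal configurations for $c_\Delta$ are exactly the $H(p,q)$'s among all components, not to compare coefficient by coefficient blindly.

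So the actual argument must go the other way: I would observe that $H(\lceil\Delta/2\rceil+1,\lfloor\Delta/2\rfloor)$ is precisely the $\Delta$-special graph of type~(b) built from $K_{\lceil\Delta/2\rceil+1}$ (and similarly for the other one from $K_{\lfloor\Delta/2\rfloor+1}$), and for such a graph $m = (\lceil\Delta/2\rceil+1)(\lfloor\Delta/2\rfloor+1) = \lfloor\Delta^2/4\rfloor+\Delta+1$ exactly, so from Theorem~\ref{thm:form2}(i), $i(H) = b_\Delta m + \frac{\Delta}{\lfloor\Delta^2/4\rfloor+\Delta}$; plugging $m = \lfloor\Delta^2/4\rfloor+\Delta+1$ one checks this equals $c_\Delta m$ on the nose (both sides equal $\lfloor\Delta^2/4\rfloor+1$, since an independent dominating set must pick all $q$ leaves at all but one clique vertex, i.e.\ $q(p-1)+1$ vertices). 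Then for the inequality part, I split into: (1) $H$ not $\Delta$-special, where Theorem~\ref{thm:form2}(ii) gives $i(H)\le b_\Delta m$ and I must upgrade this to $c_\Delta m$ — this does \emph{not} follow formally and so must be false in general unless... in fact the graphs mentioned right after Theorem~\ref{thm:form2} (a $\Delta$-special graph minus a leaf) are non-special with $i = b_\Delta m$, and for those one computes $m$ and verifies $b_\Delta m \le c_\Delta m$ fails — meaning I have misread and the corollary's bound $c_\Delta$ with the $+1$-free form is genuinely larger-denominator hence \emph{smaller}, so actually these examples would \emph{violate} the corollary unless they have isolated vertices or I recompute. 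The resolution I expect: $\lfloor(\Delta+2)^2/4\rfloor = \lfloor\Delta^2/4\rfloor+\Delta+1$ makes $c_\Delta$ smaller than $b_\Delta$, so $c_\Delta\abs{V(G)}$ is a \emph{weaker} (larger? no, smaller) bound — I must carefully recompute whether $c_\Delta < b_\Delta$ forces the corollary to be easier or harder, and I suspect the truth is that the corollary's coefficient, when multiplied against $\abs{V(G)}$ without the ``$-1$'' correction, is actually \emph{larger} for every graph since $c_\Delta$ vs $b_\Delta$ with the additive constant balances out exactly at the extremal graphs. The main obstacle, then, is precisely this bookkeeping: showing that for \emph{every} connected $H$ on $m\ge2$ vertices one has $i(H)\le c_\Delta m$, by combining Theorem~\ref{thm:form2}'s per-component bound $i(H)\le b_\Delta m + [\,H\text{ is }\Delta\text{-special}\,]\cdot\frac{\Delta}{\lfloor\Delta^2/4\rfloor+\Delta}$ with the lower bound $m\ge \lfloor\Delta^2/4\rfloor+\Delta+1$ that holds for every $\Delta$-special $H$ with $m\ge2$ (this is where type~(b),(c),(d) structure is used: a non-trivial $\Delta$-special graph has at least that many vertices), since then $b_\Delta m + \frac{\Delta}{\lfloor\Delta^2/4\rfloor+\Delta} \le b_\Delta m + \frac{\Delta}{\lfloor\Delta^2/4\rfloor+\Delta}\cdot\frac{m}{\lfloor\Delta^2/4\rfloor+\Delta+1} = c_\Delta m$. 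For non-special $H$ with $\Delta\ne5$ the inequality $b_\Delta m \le c_\Delta m$ is false, so instead I use that non-special $H$ must \emph{also} satisfy $m\ge$ something, or more likely the cleaner route: handle $\Delta=5$ via $\frac59 = c_5$ directly (indeed $\lfloor49/4\rfloor=12$ wait $\lfloor 7^2/4\rfloor = 12$, and $5/12 = $ hmm $\ne 4/9$; recompute: $c_5 = 1 - 5/\lfloor49/4\rfloor = 1-5/12 = 7/12\ne5/9$, so even this needs care) — in short, the bulk of the work and the place I expect to spend real effort is matching the two closed forms and pinning down, component by component, that the only way to meet $c_\Delta m$ with equality is $H(p,q)$, using the explicit vertex count $m=\lfloor\Delta^2/4\rfloor+\Delta+1$ of those graphs together with the equality characterization in Theorem~\ref{thm:form2}(i) and the fact that all non-trivial $\Delta$-special graphs have $m\ge \lfloor\Delta^2/4\rfloor+\Delta+1$ with equality forcing type~(b).
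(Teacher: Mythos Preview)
Your core strategy matches the paper's exactly, but you have the central inequality backwards throughout. Since $\lfloor(\Delta+2)^2/4\rfloor = \lfloor\Delta^2/4\rfloor+\Delta+1$ is \emph{larger} than $\lfloor\Delta^2/4\rfloor+\Delta$, the fraction you subtract in $c_\Delta$ is \emph{smaller}, hence $c_\Delta > b_\Delta$, not $c_\Delta < b_\Delta$. This single sign error is what sends you into the long tangle in the middle of your proposal. Once you fix it, the non-special case is immediate: for $\Delta\ne 5$, Theorem~\ref{thm:form2}(ii) gives $i(H)\le b_\Delta m < c_\Delta m$ with strict inequality; for $\Delta=5$, Theorem~\ref{thm:form2}(iii) gives $i(H)\le \tfrac59 m < \tfrac{7}{12}m = c_5 m$, again strictly (and your computation $c_5=7/12$ is correct, so the fact that $\tfrac59\ne c_5$ is a feature, not a problem).

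With that corrected, your final paragraph is exactly the paper's argument: reduce to a single component, dispose of $\Delta\le 3$ separately, observe that non-special components give strict inequality, and for a non-trivial $\Delta$-special component use the vertex-count lower bound $m\ge \lfloor(\Delta+2)^2/4\rfloor$ (which holds because ``no isolated vertex'' rules out the trivial $\Delta$-special graph $K_1$) to convert $b_\Delta m + \tfrac{\Delta}{\lfloor\Delta^2/4\rfloor+\Delta}$ into $\le c_\Delta m$. Your displayed identity $b_\Delta m + \tfrac{\Delta}{\lfloor\Delta^2/4\rfloor+\Delta}\cdot\tfrac{m}{\lfloor\Delta^2/4\rfloor+\Delta+1}=c_\Delta m$ is correct and is precisely the equivalence the paper uses. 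Equality then forces $H$ to be $\Delta$-special with $m=\lfloor(\Delta+2)^2/4\rfloor$, i.e.\ one of the two $H(p,q)$'s, as you say.
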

\begin{proof}[Proof of Corollary~\ref{cor:main} assuming Theorem~\ref{thm:form2}]
    Let $n=\abs{V(G)}$.
    It is trivial if $\Delta\le2$.
    We may assume that $G$ is connected.
    Akbari et~al.~\cite[Theorem 3]{AADHHN2020} showed that $i(G)\le n/2$ if $\Delta=3$.
    So we may assume that $\Delta\ge 4$.
    We may assume that $G$ is $\Delta$-special by (ii) and (iii) of Theorem~\ref{thm:form2}, because $\frac59<\frac7{12}$ and $\frac{\Delta}{\lfloor\Delta^2/4\rfloor+\Delta}>\frac{\Delta}{
        \lfloor(\Delta+2)^2/4\rfloor
    }$.
    Since $G$ has no isolated vertex,
    $n\ge (\lfloor \Delta/2\rfloor+1)(\Delta-\lfloor \Delta/2\rfloor+1)$
    or 
    $n\ge (\lceil \Delta/2\rceil+1)(\Delta-\lceil  \Delta/2\rceil+1)$.
    In both inequalities, the right-hand side is equal and we have 
    \[ n\ge \lfloor (\Delta+2)^2/4\rfloor.\] 
    This is equivalent to 
    $\left(1-\frac{\Delta}{\lfloor \Delta^2/4\rfloor+\Delta}\right) n+ \frac{\Delta}{\lfloor \Delta^2/4\rfloor+\Delta}
    \le (1-\frac{\Delta}{
        \lfloor(\Delta+2)^2/4\rfloor
    })n$, proving the inequality.
    Furthermore the equality holds if and only if $G$ is $\Delta$-special and $n=\lfloor (\Delta+2)^2/4\rfloor$. Indeed, $H(\lceil \Delta/2\rceil+1, \lfloor \Delta/2\rfloor)$
    and $H(\lfloor \Delta/2\rfloor+1,\lceil \Delta/2\rceil)$ are the only such graphs. This completes the proof.
\end{proof}

Our proof is by induction on  $\abs{V(G)}+\abs{E(G)}$.
Here are two key inequalities on $i(G)$ for our proof.
\begin{enumerate}[(1)]
    \item $i(G)\le i(G-N_{G}[v])+1$ for a vertex $v$ of a graph $G$.
    \item $i(G)\le i(H)$ if $H$ is obtained from a graph $G$ by deleting all but one of the edges incident with a fixed vertex of degree at least $2$.
\end{enumerate}
The inequality (1) is commonly used when dealing with the independent domination, 
but the authors could not find papers using or mentioning (2). Perhaps it is because it is difficult to control the change of $i(G)$ after deleting edges. 

The paper is organized as follows. Section~\ref{sec:lem} presents several useful lemmas, including the proof of (2). 
Section~\ref{sec:special} discusses $\Delta$-special graphs.
Section~\ref{sec:main} presents the proof of the main theorem.

\section{Several lemmas}\label{sec:lem}

We write $n_0(G)$ to denote the number of isolated vertices of a graph~$G$.
We start with a few useful lemmas.

\begin{lemma}\label{lem:deg}
    Let $H$ be a graph and $w$ be a vertex with degree at least $2$.
    Let $u_1,u_2,\ldots,u_k$ be the list of all neighbors of $w$.
    Then $H'=H-\{wu_i:i \in \{1,2,\ldots,k-1\}\}$ satisfies $i(H) \leq i(H')$.
    \end{lemma}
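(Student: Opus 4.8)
The plan is to take a minimum independent dominating set $S'$ of $H'$ and show it can be converted into an independent dominating set of $H$ of size at most $\abs{S'}$. The only way $S'$ can fail to work in $H$ is that the edges we deleted, namely $wu_1,\dots,wu_{k-1}$, have been ``restored''; in $H'$ the vertex $w$ has the single neighbor $u_k$. So the only two things that can go wrong in $H$ are: (a) $S'$ is no longer independent, which happens exactly when $w\in S'$ and some $u_i\in S'$ with $i<k$; or (b) $S'$ is no longer dominating, which cannot happen at all, since every vertex of $H'$ other than $w$ has the same or larger neighborhood in $H$, and $w$ is dominated in $H$ whenever it is dominated in $H'$ (its $H'$-neighbor $u_k$ is still a neighbor in $H$). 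Hence the only obstruction is (a).

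So assume $w\in S'$ and let $A=\{u_i : i<k,\ u_i\in S'\}\neq\emptyset$; note $u_k\notin S'$ because $wu_k\in E(H')$ and $S'$ is independent in $H'$. The idea is to delete $w$ from $S'$ and argue that $S'\setminus\{w\}$ already dominates $w$ (indeed $u_i\in A$ is adjacent to $w$ in $H$), and then greedily extend $S'\setminus\{w\}$ to a maximal independent set of $H$; this only removes one element and adds at most one, so the size does not increase — but that is not quite enough, since we need the size to not exceed $\abs{S'}=i(H')$, and adding back one vertex could keep it equal, which is fine. Actually the cleanest route: $S'\setminus\{w\}$ is independent in $H$ (removing a vertex preserves independence, and the edges among the remaining vertices are unchanged since only edges at $w$ differ between $H$ and $H'$), and it dominates everything $S'$ dominated in $H'$ except possibly $w$ itself — but $w$ is dominated by any $u_i\in A$. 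Therefore $S'\setminus\{w\}$ is an independent dominating set of $H$... except it might not be maximal. Extend it to a maximal independent set $S$ of $H$ by adding vertices one at a time; since $S'\setminus\{w\}$ is already dominating, in fact no vertex can be added, so $S=S'\setminus\{w\}$ is already a maximal independent set (equivalently, an independent dominating set) of $H$, giving $i(H)\le \abs{S'}-1 < i(H')$. In the remaining case $w\notin S'$ or $A=\emptyset$, $S'$ itself is independent and dominating in $H$, so $i(H)\le\abs{S'}=i(H')$.

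The main subtlety — and the one step I would write out carefully — is verifying the domination claim: that $S'\setminus\{w\}$ (or $S'$) still dominates \emph{every} vertex of $H$, not just those of $H'$. Since $V(H)=V(H')$ and $E(H)\supseteq E(H')$ (edges are only added when passing from $H'$ to $H$), any vertex dominated in $H'$ by a set is dominated in $H$ by the same set; the single vertex needing a new argument is $w$, for which we use that it still has its old neighbor $u_k$, or that some $u_i\in A$ is now adjacent to it. The only real case analysis is whether $w\in S'$, and whether $w$ has a neighbor in $S'$ among $u_1,\dots,u_{k-1}$; the hypothesis $d_H(w)\ge 2$ is used precisely to guarantee that $u_k$ exists and is distinct from $u_i$, so that $w\notin S'$ forces $w$ to be dominated in $H'$ (hence in $H$) and does not create an isolated-vertex issue.
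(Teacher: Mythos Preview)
Your overall strategy matches the paper's: take a minimum independent dominating set $S'$ of $H'$, observe that domination survives passage to $H$ since $E(H')\subseteq E(H)$, and then repair independence when $w\in S'$ and some $u_i\in S'$ with $i<k$ by deleting $w$. However, there is a genuine gap in the repair step.

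You assert that $S'\setminus\{w\}$ ``dominates everything $S'$ dominated in $H'$ except possibly $w$ itself'', and conclude that $S'\setminus\{w\}$ is already dominating in $H$. This is false: the vertex $u_k$ may fail to be dominated. Indeed, $u_k\notin S'$ (as you note), and in $H'$ the vertex $w$ is a neighbor of $u_k$; it may well be the \emph{only} neighbor of $u_k$ lying in $S'$. Since $N_H(u_k)=N_{H'}(u_k)$ (the deleted edges are $wu_i$ for $i<k$, none of which touches $u_k$), removing $w$ can leave $u_k$ with no neighbor in $S'\setminus\{w\}$. Your subsequent claim that ``no vertex can be added'' to $S'\setminus\{w\}$ then fails, and the strict inequality $i(H)\le i(H')-1$ you derive is not valid in general.

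The fix is exactly what the paper does: after noting that $u_k$ is the only vertex that can lose domination, split into two subcases. If $S'\setminus\{w\}$ already contains a neighbor of $u_k$, then $S'\setminus\{w\}$ is an independent dominating set of $H$ and $i(H)\le i(H')-1$. Otherwise, $u_k$ has no neighbor in $S'\setminus\{w\}$, so $(S'\setminus\{w\})\cup\{u_k\}$ is independent in $H$; it dominates $u_k$ trivially and every other vertex as before, giving $i(H)\le\abs{S'}=i(H')$. Your ``extend to a maximal independent set'' idea would also work once you recognize that $u_k$ is the unique possibly-undominated vertex, since the extension then adds at most the single vertex $u_k$.
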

    \begin{proof}
    Let $S$ be a minimum independent dominating set of $H'$.
    Then $S$ is a dominating set of $H$.
    If $S$ is an independent set of $H$, then $i(H)\le \abs{S}=i(H')$
    and therefore we may assume that 
    $S$ contains both $w$ and $u_j$ for some $j\in\{1,2,\ldots,k-1\}$.
    If $S \setminus \{w\}$ has a neighbor of $u_k$, then $S \setminus \{w\}$ is an independent dominating set of $H$ so that $i(H) \le i(H')-1$. Therefore, we may assume that $S \setminus \{w\}$ has no neighbor of $u_k$.
    It follows that $(S \setminus \{w\}) \cup \{u_k\}$ is an independent dominating set of $H$, implying that $i(H)\le \abs{(S \setminus \{w\}) \cup \{u_k\}}=i(H')$.
    \end{proof}

\begin{lemma}\label{lem:1sum}
    Let $G_1$, $G_2$ be graphs with the unique common vertex $v$.
    If $G_i$ has a minimum independent dominating set $S_i$ containing  $v$ such that $S_i\setminus \{v\}$ is a dominating set of $G_i-v$ for some $i=1,2$, 
    then 
    \[ i(G_1\cup G_2)=i(G_1)+i(G_2)-1.\] 
\end{lemma}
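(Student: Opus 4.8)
The plan is to prove the two inequalities $i(G_1\cup G_2)\le i(G_1)+i(G_2)-1$ and $i(G_1\cup G_2)\ge i(G_1)+i(G_2)-1$ separately. For the upper bound, I would take the special set $S_i$ guaranteed by the hypothesis (say $i=1$, so $v\in S_1$ and $S_1\setminus\{v\}$ dominates $G_1-v$) together with a minimum independent dominating set $S_2$ of $G_2$; after possibly translating $S_2$ to one that also contains $v$, I form $S_1\cup S_2$. Since $v$ is the unique common vertex and there are no edges between $V(G_1)\setminus\{v\}$ and $V(G_2)\setminus\{v\}$, independence of $S_1\cup S_2$ follows from independence of $S_1$ and $S_2$, and domination follows because every vertex of $G_1$ is dominated within $G_1$ and every vertex of $G_2$ within $G_2$. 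Counting gives $\abs{S_1\cup S_2}=\abs{S_1}+\abs{S_2}-1=i(G_1)+i(G_2)-1$.

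The subtlety in the upper bound is whether we may assume $v\in S_2$; if $S_2$ is a minimum independent dominating set of $G_2$ not containing $v$, then $v$ has a neighbor in $S_2$, and I would instead use $S_1'=S_1\setminus\{v\}$ (which dominates $G_1-v$ by hypothesis, and dominates $v$ as well since $d_{G_1}(v)\ge 1$... more care needed here). Actually the cleanest route: set $S=(S_1\setminus\{v\})\cup S_2$ in this case. It is independent since the only possible conflict would involve $v$, which is absent, and $S_1\setminus\{v\}\subseteq V(G_1)\setminus\{v\}$ is disjoint and non-adjacent to $S_2\setminus\{v\}\subseteq V(G_2)\setminus\{v\}$. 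It dominates $G_1-v$ by hypothesis, dominates $v$ via $S_2$, and dominates $G_2$ via $S_2$; its size is $\abs{S_1}-1+\abs{S_2}=i(G_1)+i(G_2)-1$. Either way the bound holds; I would organize this as a short case analysis on whether $v\in S_2$.

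For the lower bound, let $S$ be any independent dominating set of $G_1\cup G_2$. Then $S\cap V(G_1)$ is independent in $G_1$, and it dominates $G_1$ except possibly $v$ (if $v\notin S$ and all neighbors of $v$ in $S$ lie in $G_2$); similarly for $S\cap V(G_2)$. I would argue that $v$ is dominated within at least one of the two sides, so one of $S\cap V(G_1)$, $S\cap V(G_2)$ is already an independent dominating set of the corresponding $G_i$, while the other can be fixed by adding $v$ or one of its neighbors; combined with the fact that $\abs{S}=\abs{S\cap V(G_1)}+\abs{S\cap V(G_2)}-\abs{S\cap\{v\}}$, a short case check (on whether $v\in S$) yields $\abs{S}\ge i(G_1)+i(G_2)-1$. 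Taking $S$ minimum gives the inequality.

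I expect the main obstacle to be the bookkeeping in the lower bound when $v\notin S$: in that case $v$ is dominated, say, from the $G_1$ side, so $S\cap V(G_1)$ is an independent dominating set of $G_1$ of size $\ge i(G_1)$, but $S\cap V(G_2)$ only dominates $G_2-v$, and one must check that $(S\cap V(G_2))\cup\{v\}$ is independent in $G_2$ — which holds precisely because no vertex of $S\cap V(G_2)$ is adjacent to $v$, as $v$'s only $S$-neighbors lie in $V(G_1)$. This gives $\abs{S\cap V(G_2)}\ge i(G_2)-1$, and since $\abs{S}=\abs{S\cap V(G_1)}+\abs{S\cap V(G_2)}$ here, we get $\abs{S}\ge i(G_1)+i(G_2)-1$. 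The case $v\in S$ is symmetric and easier. I would present the lower bound first or second depending on which reads more cleanly, but both halves are elementary once the case split is set up correctly.
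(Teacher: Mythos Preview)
Your approach matches the paper's: prove the two inequalities separately, using the special hypothesis only for the upper bound and a case analysis on how $v$ is dominated for the lower bound. Two minor points are worth cleaning up.

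For the upper bound, your case split on whether $v\in S_2$ is unnecessary. The paper simply takes the special set (say $S_2$, with $v\in S_2$ and $S_2\setminus\{v\}$ dominating $G_2-v$) and an \emph{arbitrary} minimum independent dominating set $S_1$ of $G_1$, and observes that $S_1\cup(S_2\setminus\{v\})$ is always an independent dominating set of $G_1\cup G_2$: independence holds because $v$ has no neighbor in $S_2\setminus\{v\}$, and domination holds because $S_1$ dominates $G_1$ (including $v$) while $S_2\setminus\{v\}$ dominates $G_2-v$. Since $S_1$ and $S_2\setminus\{v\}$ are disjoint, the size is exactly $i(G_1)+i(G_2)-1$.

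For the lower bound, your assertion that ``$v$'s only $S$-neighbors lie in $V(G_1)$'' does not follow from ``$v$ is dominated from the $G_1$ side''; $v$ could well have $S$-neighbors on both sides. Of course that case is even easier (both $S\cap V(G_i)$ are then independent dominating sets and $\abs{S}\ge i(G_1)+i(G_2)$), but your write-up should account for it. The paper's case split avoids this snag: either both $S\cap V(G_i)$ are independent dominating sets of $G_i$, or (by symmetry) $S\cap V(G_1)$ fails to dominate $v$, which forces $v\notin S$ and $v$ to have no $S$-neighbor in $V(G_1)$, so $(S\cap V(G_1))\cup\{v\}$ is an independent dominating set of $G_1$ while $S\cap V(G_2)$ already is one of $G_2$.
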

\begin{proof}
    Let $G=G_1\cup G_2$.
    First let us prove $i(G) \ge i(G_1)+i(G_2)-1$.
    Let $S$ be an independent dominating set of $G$.
    If $S\cap V(G_1)$ and $S\cap V(G_2)$ are independent dominating sets of $G_1$ and $G_2$, respectively, then 
    \begin{align*} \abs{S}&\ge \abs{S\cap V(G_1)}+\abs{S\cap V(G_2)}-1
    \ge i(G_1)+i(G_2)-1.
    \end{align*}
    Note that we subtracted $1$ because $v$ may belong to $S$.
    Thus we may assume that $S\cap V(G_1)$ is not an independent dominating set of $G_1$. Then $v\notin S$ and $v$ is not dominated by $S\cap V(G_1)$. 
    Then 
    $(S\cap V(G_1))\cup\{v\}$ is an independent dominating set of $G_1$ and therefore $\abs{S\cap V(G_1)} \ge i(G_1)-1$.
    In this case, $S\cap V(G_2)$ dominates $v$ and therefore 
    $\abs{S\cap V(G_2)} \ge i(G_2)$. 
    This again implies that 
    \[ \abs{S}=\abs{S\cap V(G_1)}+\abs{S\cap V(G_2)}
    \ge i(G_1)+i(G_2)-1.\] 
    
    Now let us prove that $i(G)\le i(G_1)+i(G_2)-1$.
    Assume that $G_2$ has a minimum independent dominating set $S_2$ containing $v$ such that 
    $S_2\setminus\{v\}$ is a dominating set of $G_2-v$.
    Let $S_1$ be a minimum independent dominating set of $G_1$.
    Then $S=S_1 \cup (S_2\setminus \{v\})$ is an independent dominating set of $G$ and therefore 
    $i(G)\le \abs{S_1}+\abs{S_2}-1=i(G_1)+i(G_2)-1$.
\end{proof}

\begin{lemma}\label{lem:all-d1}
    Let $\Delta\ge 4$, $d_1<\Delta$ be integers.
    Let $G$ be a connected graph of maximum degree at most $\Delta$.
    Let $X$ be the set of all vertices of degree~$1$.
    If every vertex of degree at least $2$ is adjacent to 
    exactly $d_1$ neighbors of degree~$1$, 
    then 
    \[ 
        i(G)\le \left(1-\frac{\Delta-1}{(d_1+1)(\Delta-d_1)}\right)\abs{V(G)}, 
    \] 
    unless $G-X$ is a complete graph on $\Delta-d_1+1$ vertices
    or is an odd cycle when $\Delta-d_1=2$.
\end{lemma}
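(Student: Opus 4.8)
The plan is to reduce the statement to a core graph $G' = G - X$ on which we can argue directly, using the fact that an independent dominating set of $G'$ extends cheaply to one of $G$. Write $G' = G - X$, let $n = |V(G)|$, and let $m$ be the number of vertices of degree at least $2$, so that $|V(G')| = m$ and $|X| = m \cdot d_1$ (every such vertex contributes exactly $d_1$ pendant neighbors, and a pendant vertex has exactly one neighbor, which has degree $\ge 2$), hence $n = m(d_1+1)$. The maximum degree of $G'$ is at most $\Delta - d_1$, since each vertex of $G'$ loses $d_1$ of its neighbors to $X$. The key observation is that from any independent dominating set $S'$ of $G'$ one obtains an independent dominating set of $G$ by adding, for each vertex of $V(G') \setminus S'$, the $d_1$ pendant vertices attached to it are already dominated, but the pendant vertices attached to vertices of $S'$ are not — so we add those. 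Thus $i(G) \le i(G') + d_1 \cdot (\text{number of vertices of } S')$ in the worst case; more carefully, if we take $S'$ to be a minimum independent dominating set of $G'$ and let $s = i(G')$, then $S' \cup \{\text{pendant vertices adjacent to some } v \in S'\}$ is an independent dominating set of $G$ of size $s + d_1 s = (d_1+1) i(G')$. Hence
\[
    i(G) \le (d_1+1)\, i(G').
\]

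Next I would bound $i(G')$. Since $G'$ is connected (here I need to be slightly careful: $G'$ could be disconnected or have isolated vertices if $G$ is a star-like tree, but since every vertex of degree $\ge 2$ in $G$ has $d_1 \ge 1$ pendant neighbors and $\Delta \ge 4 > d_1$, such a vertex has at least one neighbor of degree $\ge 2$ unless $m = 1$; the case $m=1$ is trivial since then $G' = K_1$ and the excluded cases or a direct count finish it) — assuming $m \ge 2$ so that $G'$ has no isolated vertex and maximum degree $\delta' := \Delta - d_1 \ge 1$, the trivial bound $i(G') \le |V(G')| - \delta' = m - (\Delta - d_1)$ applies when $\delta' \ge 1$, using that a maximal independent set containing a maximum-degree vertex of $G'$ omits all $\delta'$ of its neighbors. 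Plugging in,
\[
    i(G) \le (d_1+1)\bigl(m - (\Delta - d_1)\bigr) = (d_1+1) m - (d_1+1)(\Delta - d_1) = n - (d_1+1)(\Delta-d_1).
\]
Now I must check that $n - (d_1+1)(\Delta - d_1) \le \bigl(1 - \tfrac{\Delta-1}{(d_1+1)(\Delta-d_1)}\bigr) n$, i.e. that $(d_1+1)(\Delta-d_1) \ge \tfrac{\Delta-1}{(d_1+1)(\Delta-d_1)}\, n$, i.e. $\bigl((d_1+1)(\Delta-d_1)\bigr)^2 \ge (\Delta-1) n = (\Delta-1)(d_1+1) m$. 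This is a clean inequality to verify once we know a lower bound on $(d_1+1)(\Delta-d_1)$ relative to $m$; but $m$ can be large, so the trivial bound alone is not enough — we need a bound of the form $i(G') \le \alpha m$ with $\alpha$ small, not $i(G') \le m - \delta'$.

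This is the crux, and it is where the excluded cases enter: I would apply a known Caro–Wei / greedy-type bound, or rather the main structural results of the area, to get $i(G') \le \bigl(1 - \tfrac{\delta'}{\lfloor \delta'^2/4\rfloor + \delta'}\bigr)|V(G')|$-type control when $\delta' \ge 3$ via Theorem~\ref{thm:form1} applied inductively (legal since $|V(G')| + |E(G')| < |V(G)| + |E(G)|$), and handle $\delta' \le 2$ by hand. When $\delta' = \Delta - d_1 = 1$, $G'$ is a graph of maximum degree $1$, i.e. a matching plus isolated vertices; connectivity of $G$ forces $G' $ itself to be a single edge or a single vertex, and a short direct computation gives the bound. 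When $\delta' = 2$, $G'$ is a disjoint union of paths and cycles; again connectivity of $G$ forces $G'$ to be a single path or a single cycle, and for a path on $m$ vertices $i(G') = \lceil m/3 \rceil$ while for a cycle it is $\lceil m/3 \rceil$ as well, except an odd cycle is precisely the excluded case and is handled by noting equality is not claimed there. For $\delta' \ge 3$, after getting $i(G') \le c_{\delta'} m$ from the inductive hypothesis with $c_{\delta'} = 1 - \tfrac{\delta'}{\lfloor \delta'^2/4\rfloor + \delta'}$ (and using that the "$+1$" term in Theorem~\ref{thm:form1} only helps, or splitting off the special case where $G'$ is a complete graph $K_{\delta'+1}$, which is the other excluded case), it remains to check the numerical inequality $(d_1+1)\, c_{\Delta - d_1} \le 1 - \tfrac{\Delta-1}{(d_1+1)(\Delta-d_1)}$ for all $4 \le \Delta$ and $1 \le d_1 < \Delta$.

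The main obstacle I anticipate is this final numerical verification: it is a two-parameter inequality in $\Delta$ and $d_1$ where both sides involve floor functions, so I expect to split into the parities of $\Delta - d_1$ (to remove the floor in $c_{\Delta-d_1}$) and possibly also track whether $d_1$ is close to $\Delta$. A secondary obstacle is making the reduction $i(G) \le (d_1+1) i(G')$ fully rigorous when $G'$ has isolated vertices — but as noted, connectivity of $G$ together with $\Delta > d_1$ prevents $G'$ from having an isolated vertex unless $G$ has only one vertex of degree $\ge 2$, and that degenerate case is either $\Delta$-special-like or directly checked. I would also need to confirm that the two genuine exceptions in the statement ($G - X$ a complete graph on $\Delta - d_1 + 1$ vertices, or an odd cycle when $\Delta - d_1 = 2$) are exactly the places where the chain of inequalities is tight, which follows because those are precisely the equality cases of the bound $i(G') \le c_{\delta'} |V(G')|$ for $\delta' \ge 3$ and of the path/cycle computation for $\delta' = 2$.
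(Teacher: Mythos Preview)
Your central reduction $i(G)\le (d_1+1)\,i(G')$ is false, and the construction you describe does not produce an independent set. If $S'$ is an independent dominating set of $G'=G-X$, then the pendants attached to vertices of $S'$ are already dominated (by their unique neighbour, which lies in $S'$), while the pendants attached to $V(G')\setminus S'$ are not. Adding the latter yields an independent dominating set of $G$ of size
\[
|S'|+d_1\bigl(m-|S'|\bigr)=d_1 m-(d_1-1)|S'|,\qquad m=|V(G')|,
\]
not $(d_1+1)|S'|$. (Adding the pendants of $S'$, as you wrote, destroys independence.) A concrete counterexample to your inequality: take $\Delta=4$, $d_1=2$, and $G'=C_6$, so $G$ is a $6$-cycle with two pendants at each vertex ($n=18$). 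Here $i(C_6)=2$, so your bound would give $i(G)\le 6$, but in fact $i(G)=9$: for every vertex $v$ of the cycle either $v\in S$ or both pendants of $v$ are in $S$, and at most three cycle vertices can lie in an independent $S$.

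The corrected bound $d_1 m-(d_1-1)|S'|$ is \emph{decreasing} in $|S'|$ when $d_1\ge 2$, so taking a minimum independent dominating set of $G'$ is the wrong direction; you want $|S'|$ as large as possible, i.e.\ a maximum independent set of $G'$ (which, being maximal, is automatically dominating). This is exactly what the paper does: with $\alpha=\alpha(G')$ one gets $i(G)\le d_1 m-(d_1-1)\alpha$, then uses $\alpha\ge m/\chi(G')$ and Brooks' theorem $\chi(G')\le \Delta-d_1$ to finish in one line. The two excluded cases in the lemma statement are precisely the Brooks exceptions for $G'$, not the extremal cases of any inductive bound on $i(G')$; once you see this, the whole plan of bounding $i(G')$ via Theorem~\ref{thm:form1} and then checking a two-parameter numerical inequality evaporates.
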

\begin{proof}
    Let $n=\abs{V(G)}$.
    Let $m=n/(d_1+1)$ be the number of vertices of $G-X$.
    Observe that $G-X$ is connected.
    Let $S$ be a maximum independent set of $G-X$
    and let $\alpha=\abs{S}$.
    Then $\alpha\ge \frac{m}{\chi(G-X)}=\frac{1}{(d_1+1)\chi(G-X)}n$. 
    Observe that $S\cup\{v\in X: v\notin N_G(S)\}$ is an independent dominating set of $G$ and therefore 
    \begin{align*}
        i(G)&\le \alpha + (m-\alpha)d_1 
          =md_1-\alpha(d_1-1) \\
        &\le  n-\frac{d_1-1+\chi(G-X)}{(d_1+1)\chi(G-X)}n.
    \end{align*} 

    By Brooks' theorem, we deduce that $\chi(G-X)\le \Delta-d_1$. Therefore 
    \[ 
        i(G)
        \le  m(d_1+1)-m\frac{\Delta-1}{\Delta-d_1}
        = \left(1-\frac{\Delta-1}{(d_1+1)(\Delta-d_1)}\right) n. 
        \qedhere
    \] 
\end{proof}
\begin{lemma}\label{lem:select}
    Let $\Delta$ be an integer.
    Let $G$ be a connected graph with at least one vertex of degree at least $2$ such that the maximum degree is at most $\Delta$ and every vertex of degree at least $2$ is adjacent to at least one vertex of degree~$1$.    
    Let $d_1$ be the maximum number of degree-$1$ neighbors of a vertex.
    If some vertex has less than $d_1$ neighbors of degree $1$, 
    then there is a vertex $v$ such that
    \[ 
        \frac{n_0(G-N_G[v])+1}{d_G(v)}\le \frac{\lfloor \Delta^2/4\rfloor}{\Delta}.
    \]
\end{lemma}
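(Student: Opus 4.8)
The plan is to replace the inequality by an exact count of the vertices that become isolated after deleting a closed neighborhood, and then to choose the vertex $v$ by hand. Write $X$ for the set of vertices of degree~$1$, and for a vertex $w$ with $d_G(w)\ge 2$ let $f(w)$ be the number of neighbors of $w$ in $X$; by hypothesis $1\le f(w)\le d_1$ for every such $w$. The condition ``some vertex has fewer than $d_1$ neighbors of degree $1$'' is to be read as: some vertex of degree at least $2$ fails to attain $d_1$, i.e.\ the set $B=\{w:d_G(w)\ge 2,\ f(w)<d_1\}$ is nonempty. (This hypothesis is essential: for $\Delta=4$ the graph $H(3,2)$, where every vertex of the triangle has exactly $d_1=2$ neighbors of degree~$1$, violates the conclusion.) The first step is to establish the identity
\[
    n_0\bigl(G-N_G[v]\bigr)=\sum_{w\in N_G(v)\setminus X} f(w)\qquad\text{whenever } d_G(v)\ge 2 .
\]
Indeed, any vertex $u$ isolated in $G-N_G[v]$ has degree~$1$ in $G$: if $d_G(u)\ge 2$ then $u$ has a neighbor $x\in X$, and $N_G(u)\subseteq N_G[v]$ would force $x\in N_G(v)$, so the unique neighbor of $x$ would be both $u$ and $v$, which is absurd. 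Moreover each such degree-$1$ vertex $u$ is attached to exactly one neighbor of $v$, that neighbor has degree at least $2$, and conversely every degree-$1$ neighbor of a fixed $w\in N_G(v)\setminus X$ is isolated in $G-N_G[v]$; counting, exactly $f(w)$ of them are contributed by each such $w$.

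Next I would pick $v$. Let $A=\{w:d_G(w)\ge 2,\ f(w)=d_1\}$, which is nonempty. I claim some $v\in A$ has a neighbor $w^{\ast}\in B$. If not, then every vertex of $A$ has all of its neighbors of degree at least $2$ again in $A$; hence the set consisting of $A$ together with the degree-$1$ vertices attached to vertices of $A$ has no edge leaving it, and since $G$ is connected and $A\neq\emptyset$, this set is all of $V(G)$, forcing $B=\emptyset$, a contradiction. Fix such a $v$ and put $d=d_G(v)$. Since $v$ has $d_1$ neighbors in $X$ and the extra neighbor $w^{\ast}\notin X$, we get $d\ge d_1+1$ and $\abs{N_G(v)\setminus X}=d-d_1$. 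In the sum $\sum_{w\in N_G(v)\setminus X}f(w)$ the term $f(w^{\ast})$ is at most $d_1-1$, and each of the other $d-d_1-1$ terms is at most $d_1$ by maximality of $d_1$, so
\[
    n_0\bigl(G-N_G[v]\bigr)+1\le (d_1-1)+(d-d_1-1)d_1+1=(d-d_1)d_1 .
\]

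It then remains to verify $(d-d_1)d_1\le\frac{\lfloor\Delta^2/4\rfloor}{\Delta}\,d$, equivalently $d\bigl(\Delta d_1-\lfloor\Delta^2/4\rfloor\bigr)\le\Delta d_1^{2}$. This is immediate when $\Delta d_1\le\lfloor\Delta^2/4\rfloor$; otherwise, since $d\le\Delta$, it suffices that $\Delta d_1-\lfloor\Delta^2/4\rfloor\le d_1^{2}$, i.e.\ $d_1(\Delta-d_1)\le\lfloor\Delta^2/4\rfloor$, which holds by the AM--GM inequality together with the integrality of $d_1(\Delta-d_1)$. Combining the last two displays gives $\frac{n_0(G-N_G[v])+1}{d_G(v)}\le\frac{(d-d_1)d_1}{d}\le\frac{\lfloor\Delta^2/4\rfloor}{\Delta}$, as required.

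The step I expect to be the crux is controlling the count tightly enough. Bounding every term of the sum crudely by $d_1$ only yields $n_0(G-N_G[v])+1\le(d-d_1)d_1+1$, which is off by exactly one in the extremal configuration $d_1=\lfloor\Delta/2\rfloor$, $d=\Delta$. The somewhat indirect choice of $v$ — inside $A$ but adjacent to $B$ — exists precisely so that one term of the sum drops from $d_1$ to $d_1-1$, recovering the missing unit, and this is the only place the hypothesis $B\neq\emptyset$ enters the argument.
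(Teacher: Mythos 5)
Your proof is correct and takes essentially the same route as the paper's: both select a vertex $v$ with $d_1$ pendant neighbors that is adjacent to a degree-$\ge 2$ vertex having fewer than $d_1$ pendant neighbors (you via the $A$/$B$ connectivity argument, the paper via an edge of the connected graph $G-X$), and both reduce to the bound $n_0(G-N_G[v])+1\le d_1(d_G(v)-d_1)$ before the elementary estimate by $\lfloor\Delta^2/4\rfloor/\Delta$ (you by a direct case split, the paper by monotonicity of $x(y-x)/y$ in $y$). Your explicit reading of the hypothesis as ``some vertex of degree at least $2$'' is exactly how the paper applies the lemma, so the two arguments match in substance.
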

\begin{proof}
    Let $X$ be the set of all vertices of degree~$1$ in $G$.
    Since $G-X$ is connected, there is an edge $uv$ such that
    $u$ has less than $d_1$ neighbors of degree~$1$
    and $v$ has exactly $d_1$ neighbors of degree~$1$.
    By the assumption, every isolated vertex of $G-N_G[v]$ is adjacent to exactly one neighbor of $v$
    and therefore each isolated vertex of $G-N_G[v]$ is a degree-$1$ neighbor of a vertex in $N_G(v)$.
    Since $u$ has less than $d_1$ neighbors of degree~$1$, we have 
    \[ n_0(G-N_G[v])+1\le d_1(d_G(v)-d_1).\]
    Let $f(x,y)=\frac{x(y-x)}{y}$ for integers $1\le x<y\le \Delta$.
    Observe that $f(x,y)\le f(x,y+1)$ for $x<y<\Delta$
    and therefore the maximum of $f$ is achieved when $y=\Delta$. 
    Since $\frac{n_0(G-N_G[v])+1}{d_G(v)}\le f(d_1,d_G(v))$ and $x(\Delta-x)\le \lfloor \Delta^2/4\rfloor$ for any integer $x$, we deduce that 
    $ \frac{n_0(G-N_G[v])+1}{d_G(v)}\le \frac{\lfloor \Delta^2/4\rfloor}{\Delta}$.
\end{proof}
\begin{lemma}\label{lem:ineq}
    Let $x<y$ be positive integers and $y\ge 5$. Then 
    \[ 
    \frac{x(y-x)+1}{y}< \frac{x(y+1-x)}{y+1}
    \text{ or }
    \frac{x(y-x)+1}{y}< \frac{(x+1)(y-x)}{y+1}.
    \] 
\end{lemma}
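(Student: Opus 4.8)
The plan is to clear denominators in both inequalities, reduce each to a purely quadratic condition, and then show the two conditions cannot both fail. Write $b=y-x$, so that $x$ and $b$ are positive integers with $x+b=y\ge 5$. Since all quantities involved are positive, I would multiply the first inequality through by $y(y+1)$ and expand; the terms that are cubic or quadratic in $y$ cancel, and one is left with $y+1<x^2$. Carrying out the same computation for the second inequality leaves $y+1<(y-x)^2$. Thus
\[
\frac{x(y-x)+1}{y} < \frac{x(y+1-x)}{y+1} \iff x^2 > y+1,
\qquad
\frac{x(y-x)+1}{y} < \frac{(x+1)(y-x)}{y+1} \iff (y-x)^2 > y+1,
\]
so it suffices to prove that at least one of $x^2>y+1$ and $b^2>y+1$ holds.

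Next I would argue by contradiction: suppose both fail, i.e.\ $x^2\le y+1=x+b+1$ and $b^2\le x+b+1$. Adding these and rearranging gives $x^2+b^2\le 2(x+b)+2$, which completing the square rewrites as
\[
(x-1)^2+(b-1)^2\le 4 .
\]
Since $x-1$ and $b-1$ are nonnegative integers, the only possibilities for $(x-1,b-1)$ are $(0,0)$, $(1,0)$, $(0,1)$, $(1,1)$, $(2,0)$, $(0,2)$; in each of these $y=x+b=(x-1)+(b-1)+2\le 4$, contradicting the hypothesis $y\ge 5$. Hence at least one of the two inequalities holds, which is the claim.

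The argument is elementary throughout, and I do not expect a genuine obstacle. The only place calling for a little care is the bookkeeping when clearing denominators, namely checking that the higher-order terms in $y$ indeed cancel so that each inequality collapses to a single quadratic condition; once that is done, the completing-the-square step turns the whole statement into a finite case check.
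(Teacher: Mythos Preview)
Your argument is correct. The algebra checks out: multiplying the first displayed inequality by $y(y+1)$ and simplifying does give the equivalence with $x^2>y+1$, and symmetrically the second reduces to $(y-x)^2>y+1$; the completing-the-square step and the finite enumeration then finish it cleanly.

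The route differs from the paper's. The paper does not reduce the two inequalities separately; instead it proves the single averaged inequality
\[
2\,\frac{x(y-x)+1}{y} \;<\; \frac{x(y+1-x)}{y+1}+\frac{(x+1)(y-x)}{y+1},
\]
which after clearing denominators becomes $x^2+(y-x)^2>2y+2$, and then invokes $x^2+(y-x)^2\ge y^2/2$ (Cauchy--Schwarz) together with $y^2/2>2y+2$ for $y\ge 5$. Your approach is a little more informative, since it identifies exactly when each of the two original inequalities holds, but it uses integrality in the final case check; the paper's averaging argument is valid for real $x$ and $y$ as well. Ultimately both arguments land on the same core inequality $x^2+(y-x)^2>2y+2$, just reached and dispatched differently.
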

\begin{proof}
    It is enough to show that 
    \[ 
    2\frac{x(y-x)+1}{y}<  \frac{x(y+1-x)}{y+1}+\frac{(x+1)(y-x)}{y+1}, 
    \] 
    which is equivalent to 
    \[ 
    \frac{2xy-2x^2+2}{y} < \frac{ 2xy-2x^2 +y   }{y+1}.
    \] 
    As $y\ge 5$, we have 
    \begin{align*}
        \lefteqn{(y+1)(2xy-2x^2+2)-y(2xy-2x^2+y)}\\
        &= 2y+2xy-2x^2+2 - y^2 \\
        &= 2y+2  -x^2-(y-x)^2\\
        &\le 2y+2 - \frac{1}{2} y^2\\
        & < 0
    \end{align*}       
    by Cauchy-Schwarz inequality.
\end{proof}
\begin{lemma}\label{lem:select2}
    Let $\Delta\ge 4$, $d_1>0$ be integers.
    Let $G$ be a connected graph with at least one vertex of degree at least $2$ such that the maximum degree is at most $\Delta$
    and every vertex of degree at least~$2$ has exactly $d_1$ neighbors of degree~$1$.
    Let $v$ be a vertex of $G$ of degree at least~$2$.
    If $d_1\notin\{\lceil \Delta/2\rceil,\lfloor \Delta/2\rfloor\}$
    or $d_G(v)<\Delta$, 
    then 
    \[ 
        \frac{n_0(G-N_G[v])+1}{d_G(v)}\le \frac{\lfloor \Delta^2/4\rfloor}{\Delta},
    \]
    unless $\Delta=5$, $d_G(v)=4$, and $d_1=2$.
\end{lemma}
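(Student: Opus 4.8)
The plan is to reduce the statement to an arithmetic inequality by determining which vertices of $G-N_G[v]$ are isolated. Write $d=d_G(v)\ge 2$. Since $v$ has degree at least $2$, the hypothesis tells us that exactly $d_1$ of the $d$ neighbours of $v$ have degree $1$, so the remaining $d-d_1$ neighbours of $v$ have degree at least $2$. I first claim that every isolated vertex $w$ of $G-N_G[v]$ has degree $1$ in $G$ and that its unique neighbour is one of these $d-d_1$ neighbours of $v$ of degree at least $2$. Indeed, $w\notin N_G[v]$ forces $N_G(w)\subseteq N_G(v)$; if $d_G(w)\ge 2$, then since $d_1>0$ the vertex $w$ has a neighbour $x$ of degree $1$, and the unique neighbour of $x$ is $w\ne v$, so $x\notin N_G[v]$ and $x$ is a neighbour of $w$ in $G-N_G[v]$, contradicting that $w$ is isolated. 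Hence $d_G(w)=1$; its unique neighbour lies in $N_G(v)$ and, being adjacent to both $v$ and $w$, has degree at least $2$. Since each neighbour of $v$ of degree at least $2$ has exactly $d_1$ neighbours of degree $1$ and distinct degree-$1$ vertices have distinct neighbours, we conclude that
\[ n_0(G-N_G[v])\le d_1(d-d_1). \]
Setting $h(x,y)=\frac{x(y-x)+1}{y}$, this yields $\dfrac{n_0(G-N_G[v])+1}{d}\le h(d_1,d)$.

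Next I would use two elementary facts: first, for integers $x\ge 1$ and $y\ge 1$ one computes $h(x,y+1)-h(x,y)=\frac{x^2-1}{y(y+1)}\ge 0$, so $h(x,\cdot)$ is non-decreasing; second, $k(\Delta-k)\le\lfloor\Delta^2/4\rfloor$ for every integer $k$, with strict inequality exactly when $k\notin\{\lceil\Delta/2\rceil,\lfloor\Delta/2\rfloor\}$. If $d_1=d$, then $n_0(G-N_G[v])\le d_1(d-d_1)=0$, so the quantity to be bounded is $\frac{1}{d}\le 1\le\frac{\lfloor\Delta^2/4\rfloor}{\Delta}$ (using $\Delta\ge 4$); hence we may assume $d_1<d$.

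Now I split according to the hypothesis. If $d_1\notin\{\lceil\Delta/2\rceil,\lfloor\Delta/2\rfloor\}$, then monotonicity of $h$ together with $d\le\Delta$ gives
\[ h(d_1,d)\le h(d_1,\Delta)=\frac{d_1(\Delta-d_1)+1}{\Delta}\le\frac{\lfloor\Delta^2/4\rfloor}{\Delta}, \]
where the last inequality holds because the second fact forces $d_1(\Delta-d_1)\le\lfloor\Delta^2/4\rfloor-1$. If instead $d_1\in\{\lceil\Delta/2\rceil,\lfloor\Delta/2\rfloor\}$, then the hypothesis forces $d<\Delta$, so monotonicity gives $h(d_1,d)\le h(d_1,\Delta-1)$. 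When $\Delta\ge 6$, we have $\Delta-1\ge 5$ and $d_1\le\lceil\Delta/2\rceil\le\Delta-2<\Delta-1$, so Lemma~\ref{lem:ineq} applies with $x=d_1$ and $y=\Delta-1$ and gives
\[ h(d_1,\Delta-1)<\frac{d_1(\Delta-d_1)}{\Delta}\ \text{ or }\ h(d_1,\Delta-1)<\frac{(d_1+1)(\Delta-1-d_1)}{\Delta}. \]
Both right-hand sides have the form $\frac{k(\Delta-k)}{\Delta}$ for an integer $k$, hence are at most $\frac{\lfloor\Delta^2/4\rfloor}{\Delta}$, finishing this case. Finally, when $\Delta\in\{4,5\}$ there are only finitely many admissible pairs $(d_1,d)$ with $d_1\in\{\lceil\Delta/2\rceil,\lfloor\Delta/2\rfloor\}$ and $d_1<d<\Delta$, and a direct evaluation of $h(d_1,d)$ shows that the desired inequality holds in each of them except for $(\Delta,d,d_1)=(5,4,2)$, where $h(2,4)=\frac54>\frac65=\frac{\lfloor 25/4\rfloor}{5}$; this is exactly the stated exception.

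The only step that is not a routine computation is the identification of the isolated vertices of $G-N_G[v]$, and the subtle point there is that it uses the hypothesis $d_1>0$ in an essential way: it is precisely because every vertex of degree at least $2$ has a degree-$1$ neighbour that an isolated vertex of $G-N_G[v]$ cannot have degree at least $2$. After that reduction, the argument is just the monotonicity of $h$, the integer AM--GM bound, one application of Lemma~\ref{lem:ineq}, and a check of the two small values $\Delta\in\{4,5\}$.
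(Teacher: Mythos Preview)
Your proof is correct. The reduction to the arithmetic inequality $n_0(G-N_G[v])\le d_1(d-d_1)$ matches the paper's (the paper in fact asserts equality here, but your inequality suffices), and the case analysis you carry out afterwards is sound, including the direct check for $\Delta\in\{4,5\}$.

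The organization differs from the paper's in a way worth noting. The paper splits first on whether $d_G(v)=\Delta$ or $d_G(v)<\Delta$; in the latter case it disposes of $d_1\in\{1,d_G(v)-1\}$ by hand, treats $d_G(v)=4$ separately, and for $5\le d_G(v)<\Delta$ applies Lemma~\ref{lem:ineq} \emph{inductively}, climbing from $y=d_G(v)$ to $y=\Delta$ one step at a time while allowing $x$ to drift. You instead split on whether $d_1\in\{\lceil\Delta/2\rceil,\lfloor\Delta/2\rfloor\}$ and use the clean monotonicity identity $h(x,y+1)-h(x,y)=\frac{x^2-1}{y(y+1)}\ge 0$ (valid since $d_1\ge 1$) to jump directly to $y=\Delta$ or $y=\Delta-1$; Lemma~\ref{lem:ineq} is then needed only once, at the final step when $\Delta\ge 6$. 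This is a genuine simplification: your monotonicity computation replaces the paper's iterated use of Lemma~\ref{lem:ineq}, and your case split aligns more naturally with the lemma's disjunctive hypothesis.
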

\begin{proof}
    Let $v$ be a vertex of degree at least $2$. 
    By the assumption, every isolated vertex of $G-N_G[v]$ is adjacent to exactly one neighbor of $v$
    and therefore each isolated vertex of $G-N_G[v]$ is a degree-$1$ neighbor of a vertex in $N_G(v)$.
    Therefore,
    \[ n_0(G-N_G[v])=d_1(d_G(v)-d_1).\]

    If $d_G(v)=\Delta$, 
    then 
    $d_1(\Delta-d_1)\le \lfloor \Delta^2/4\rfloor-1$ because $d_1\notin\{\lceil \Delta/2\rceil,\lfloor \Delta/2\rfloor\}$. Therefore
    $ (n_0(G-N_G[v])+1)/d_G(v) \le \lfloor \Delta^2/4\rfloor / \Delta$.
    So we may assume that $d_G(v)<\Delta$.

    If $d_1=d_G(v)-1$ or $d_1=1$, then $\frac{d_1(d_G(v)-d_1)+1}{d_G(v)}=1\le \frac1\Delta\lfloor \Delta^2/4\rfloor$.
    Thus we may assume that $1<d_1<d_G(v)-1$.
    So  $d_G(v)\ge 4$. If $d_G(v)=4$, then $d_1=2$ and $\frac{d_1(d_G(v)-d_1)+1}{d_G(v)}=\frac{5}{4}$. If $\Delta\ge 6$, then $\frac1\Delta\lfloor {\Delta^2/4}\rfloor\ge \frac54=\frac{d_1(d_G(v)-d_1)+1}{d_G(v)}$. 
    (If $\Delta=5$, then $\frac{\lfloor \Delta^2/4\rfloor}{\Delta}=\frac65$, $\frac{d_1(d_G(v)-d_1)+1}{d_G(v)}=\frac54$.)

    Thus we may assume that $5\le d_G(v)<\Delta$.
    Then by Lemma~\ref{lem:ineq}, 
    $\frac{n_0(G-N_G[v])+1}{d_G(v)}\le \frac{(d_1+1)(d_G(v)-d_1)}{d_G(v)+1}$
    or 
    $\frac{n_0(G-N_G[v])+1}{d_G(v)}\le \frac{d_1(d_G(v)+1-d_1)}{d_G(v)+1}$.
    By applying Lemma~\ref{lem:ineq} inductively, we deduce that there is a positive integer $x<\Delta$ such that
    $\frac{n_0(G-N_G[v])+1}{d_G(v)}\le \frac{x(\Delta-x)}{\Delta}$.
    We deduce the conclusion because $x(\Delta-x)\le \lfloor \Delta^2/4\rfloor$.
\end{proof}

\section{Special graphs}\label{sec:special}
For an integer $\Delta\ge 4$, we say that a connected graph $G$ is called \emph{$\Delta$-special} if it satisfies the following.
\begin{enumerate}[(i)]
    \item Every vertex that belongs to some cycle has degree $\Delta$.
    \item Every edge has at least one end that belongs to some cycle.
    \item Each component of the subgraph induced by the set of all vertices that belong to cycles 
    is isomorphic to one of the following.
    \begin{enumerate}
        \item $K_{\lceil \Delta/2\rceil+1}$.
        \item $K_{\lfloor \Delta/2\rfloor+1}$.
        \item An odd cycle when $\Delta=4$.
    \end{enumerate}
\end{enumerate}
When $\Delta$ is clear from the context, we will say \emph{special} instead of $\Delta$-special for brevity.
A special graph is called \emph{non-trivial} if it has at least two vertices and \emph{trivial} otherwise.

\begin{proposition}\label{prop:special}
    Let $\Delta\ge 4$ be an integer.
    If $G$ is a $\Delta$-special graph on $n$ vertices, then 
    \[ i(G)= \left(1-\frac{\Delta}{\lfloor \Delta^2/4\rfloor+\Delta}\right)(n-1)+1 
    .\] 
\end{proposition}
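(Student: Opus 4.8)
The plan is an induction on $\abs{V(G)}$, in which I strengthen the statement to also record, for each vertex $v$ of a $\Delta$-special graph $G$ lying on no cycle, that $G$ has a minimum independent dominating set $S$ with $v\in S$ such that $S\setminus\{v\}$ dominates $G-v$. Carrying this extra property is exactly what lets me invoke Lemma~\ref{lem:1sum} at each step. Throughout I write $c=\frac{\Delta}{\fl{\Delta^2/4}+\Delta}$, so that the target value is $(1-c)(\abs{V(G)}-1)+1$, and $1-c=\frac{\fl{\Delta^2/4}}{\fl{\Delta^2/4}+\Delta}$.

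\emph{Base cases.} These are $K_1$ and the connected special graphs in which all cycle-vertices induce a single block $B$. For the latter, conditions (i) and (ii) force every non-cycle vertex to be a leaf attached to $B$: a non-cycle vertex has no non-cycle neighbor (by (ii)) and cannot have two neighbors in $B$ (that would put it on a cycle), and then (i) pins down the number of leaves at each vertex of $B$. Thus $G$ is a clique $K_{q+1}$ with $q\in\{\fl{\Delta/2},\ce{\Delta/2}\}$, or, when $\Delta=4$, an odd cycle, with leaves added to make every vertex of $B$ have degree $\Delta$. In the clique case, a minimum independent dominating set consists of one vertex of $B$ together with all leaves at the other $q$ vertices of $B$; a short case analysis on how many vertices of $B$ an independent dominating set can contain shows that nothing smaller works, so $i(G)=1+q(\Delta-q)=1+\fl{\Delta^2/4}$, which equals $(1-c)(\abs{V(G)}-1)+1$ because $\abs{V(G)}-1=(q+1)(\Delta-q+1)-1=\fl{\Delta^2/4}+\Delta$. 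The odd-cycle case ($\Delta=4$) is analogous, using a maximum independent set of the cycle together with the leaves at the cycle-vertices outside it. In each base case I also exhibit, for an arbitrary leaf $v$, a minimum independent dominating set of the required form, by putting into the set a vertex of $B$ other than the neighbor of $v$ and then $v$ itself; this verifies the strengthened property.

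\emph{Inductive step.} Suppose $G$ is $\Delta$-special with $\abs{V(G)}\ge2$ and is not one of the base cases. The crucial structural point is that $G$ then has a cut vertex lying on no cycle. Indeed, two distinct blocks among the clique/odd-cycle blocks share no vertex and span no edge, since otherwise the subgraph induced by all cycle-vertices would have a component that is not a single clique or odd cycle, contradicting (iii); hence, along a path in the block-cut tree between two such blocks, the block right after the first cut vertex is a bridge, and the next cut vertex on that path lies on no cycle. Fix such a cut vertex $v$ and write $G=G_1\cup G_2$ with $V(G_1)\cap V(G_2)=\{v\}$ and both $G_i$ proper connected subgraphs. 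Since $v$ lies on no cycle of $G$, every cycle of $G$ is contained in $G_1$ or in $G_2$, so conditions (i), (ii), (iii) transfer to each $G_i$, making $G_1$ and $G_2$ smaller $\Delta$-special graphs. As $v$ lies on no cycle of $G_1$ or $G_2$, the induction hypothesis applies to $(G_i,v)$, so Lemma~\ref{lem:1sum} gives $i(G)=i(G_1)+i(G_2)-1$, which equals $(1-c)(\abs{V(G)}-1)+1$ because $\abs{V(G)}=\abs{V(G_1)}+\abs{V(G_2)}-1$. Finally, for a non-cycle vertex $w$ of $G$, say $w\in V(G_1)$, I apply the induction hypothesis to $G_1$ at $w$ (or at $v$ when $w=v$) and to $G_2$ at $v$, and glue the resulting sets along $v$ — distinguishing the cases according to whether the chosen set for $G_1$ contains $v$, using $S_2\setminus\{v\}$ when it does not — to obtain a minimum independent dominating set of $G$ with the required property at $w$.

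The step I expect to be hardest is the structural one: producing the non-cycle cut vertex and, above all, checking that the two pieces of the resulting $1$-sum are again special so the induction can proceed; the value then follows at once from Lemma~\ref{lem:1sum}, and the rest is base-case bookkeeping and routine gluing. Choosing the strengthened induction hypothesis carefully — demanding a minimum independent dominating set $S\ni v$ for which $S\setminus\{v\}$ still dominates $G-v$, rather than merely the number $i(G_i)$ — is what makes the scheme run, since that is precisely the hypothesis of Lemma~\ref{lem:1sum}.
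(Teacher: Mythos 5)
Your proposal is correct, and its skeleton is the paper's: settle the single-block special graphs (a clique $K_{q+1}$ with $q\in\{\lfloor\Delta/2\rfloor,\lceil\Delta/2\rceil\}$, or an odd cycle when $\Delta=4$, with pendant vertices) by direct computation, then induct by splitting $G$ at a cut vertex lying on no cycle and finish with Lemma~\ref{lem:1sum}. The genuine difference is how the hypothesis of Lemma~\ref{lem:1sum} is supplied. The paper keeps the plain statement as its induction hypothesis; after locating a non-cycle cut vertex $v$ it chooses the component $F$ of $G-v$ to be \emph{minimal}, proves that the cycle vertices inside $F$ induce a connected subgraph, and thereby forces the piece $G_2=G[V(F)\cup\{v\}]$ to be single-block with $v$ pendant, so the needed minimum independent dominating set $S_2\ni v$ with $S_2\setminus\{v\}$ dominating $G_2-v$ can be read off the base-case analysis. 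You instead strengthen the induction hypothesis to carry exactly that property at every non-cycle vertex, which lets you split at an \emph{arbitrary} non-cycle cut vertex (produced via the block-cut tree, and your argument that such a vertex exists is sound) with no minimality argument; the price is the extra gluing step re-establishing the strengthened property for $G$, which works as you indicate since $S_1\cup(S_2\setminus\{v\})$ is independent, dominating, of size $i(G_1)+i(G_2)-1$ in both of your cases. The two routes are of comparable weight: your stronger invariant replaces the paper's connectivity-of-$F\cap X$ claim, and both rest on the same base-case computation, whose lower-bound half you only sketch (the paper spells it out via $\abs{S\cap X}\le 1$, respectively $\abs{S\cap X}\le(\abs{X}-1)/2$), but that part is routine.
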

\begin{proof}
    Let $t=\frac{\Delta}{\lfloor \Delta^2/4\rfloor+\Delta}$.
    Let $X$ be the set of all vertices $v$ of $G$
    such that there is a cycle containing $v$.
    If $X=\emptyset$, then $n=i(G)=1$ and therefore we may assume that $X$ is nonempty.

    First, we will treat the case that $G[X]$ has only one component.
    Let $S$ be an independent dominating set of $G$. 
    If $G[X]$ has exactly one component, then 
    $G[X]$ is 
    isomorphic to 
    $K_{\lceil \Delta/2\rceil+1}$ or 
    $K_{\lfloor \Delta/2\rfloor+1}$, 
    unless $\Delta=4$ and it is isomorphic to an odd cycle.
    Let $r$ be an integer such that $G[X]$ is $r$-regular. 
    By the definition, $r=\lceil \Delta/2\rceil$ or $r=\lfloor \Delta/2\rfloor  $.    
    Furthermore, by (i) and (ii), 
    $G$ is isomorphic to a graph obtained from $G[X]$ 
    by attaching $\Delta-r$ vertices of degree $1$ to every vertex in $X$.
    Then $n=|X|(\Delta-r+1)$.
    Suppose that $G[X]$ is isomorphic to $K_{\lceil \Delta/2\rceil+1}$ or $K_{\lfloor \Delta/2\rfloor+1}$. 
    Then $\abs{S\cap X}\le 1$. 
    Note that $n=
    \lfloor \Delta/2+1\rfloor \lceil \Delta/2+1\rceil
    = \lfloor (\Delta+2)^2/4\rfloor=\lfloor \Delta^2/4\rfloor+\Delta+1$.
    Since $S$ is dominating, 
    \begin{align*} 
        \abs{S}&= \abs{S\cap X}+\abs{X\setminus S}(\Delta-r)
    \ge 1+(|X|-1)\cdot (\Delta-r)=\lfloor \Delta^2/4\rfloor +1 \\
    &=(1-t)(n-1)+1
    .
    \end{align*}
    In this case, the equality holds if and only if 
    $\abs{S\cap X}=1$ and such a set $S$ exists. 
    If $\Delta=4$ and 
    $G[X]$ is an odd cycle,
    then $\abs{S\cap X}\le \frac{\abs{X}-1}{2}$ and since $S$ is dominating, 
    \[ \abs{S}= \abs{S\cap X}+\abs{X\setminus S}(\Delta-r)
    \ge 
    \frac{\abs{X}-1}{2} + \frac{\abs{X}+1}{2}\cdot 2
    = \frac{3\abs{X}+1}{2}= \frac{n-1}{2}+1.\] 
    In this case, the equality holds if and only if 
    $\abs{S\cap X}=(\abs{X}-1)/2$ and such a set $S$ exists. 
    Thus $i(G)=(1-t)(n-1)+1$ if $G[X]$ has one component. 
    
    So we may assume that $G[X]$ has at least two components $C_1$ and $C_2$. 
    Since $G$ is connected, 
    $G$ has a shortest path from $C_1$ to $C_2$. 
    By (ii) and (iii), this path has length $2$
    whose middle vertex $v$ is not in $X$.
    By the definition of $X$, $G-v$ is disconnected.
    Among all choices of $v$ and a component $F$ of $G-v$, 
    we choose the one such that $F$ is minimal. 
    We claim that $G[V(F)\cap X]$ is connected.
    Suppose that $G[V(F)\cap X]$ contains two components $D_1$ and $D_2$. 
    Then $F$ has a shortest path from $D_1$ to $D_2$. 
    By (ii) and (iii), this path has length $2$
    whose middle vertex $w$ is not in $X$.
    By the definition of $X$, $F-w$ is disconnected.
    Since $v$ is not in a cycle, $v$ cannot have neighbors in distinct components of $F-w$. Let $F'$ be a component of $F-w$ not having neighbors of $v$. Then $F'$ is a component of $G-w$, contradicting the choice of $v$ and $F$.
    This proves that $G[V(F)\cap X]$ is connected.
    
    Let $G_1=G-V(F)$ and $G_2=G[V(F)\cup \{v\}]$.
    Observe that both $G_1$ and $G_2$ are special.    
    
    Note that $G_2[V(G_2)\cap X]$  is isomorphic to one of (a), (b), (c)
    and $v$ has degree $1$ in $G_2$. 
    Because of our analysis in the first part of the proof, 
    $G_2$ has a minimum independent dominating set $S_2$ containing $v$ and another vertex $w$ such that $N_{G_2}(v) \subseteq N_{G_2}(w)$. 
    Then $S_2\setminus \{v\}$ is a dominating set of $G_2-v$.
    By the induction hypothesis, $i(G_1)=(1-t)(\abs{V(G_1)}-1)+1$ 
    and $i(G_2)=(1-t)(\abs{V(G_2)}-1)+1$.
    By Lemma~\ref{lem:1sum}, $i(G)=i(G_1)+i(G_2)-1
    = (1-t)(\abs{V(G_1)}-1) 
    + (1-t)(\abs{V(G_2)}-1) + 1 
    = (1-t)(n-1)+1$.
\end{proof}

\section{Main theorem}\label{sec:main}

Let $n_\Delta(G)$ be the number of $\Delta$-special components.
We now prove our main theorem, which proves Theorem~\ref{thm:form2}.

\begin{theorem}\label{thm:main}
    Let $\Delta\ge 4$ be an integer and let $G$ be a graph with maximum degree at most~$\Delta$.
    Let \[ 
        t=\begin{cases}
            \frac{4}{9} &\text{if }\Delta=5,\\
            \frac{\Delta}{\fl{\Delta^2/4}+\Delta}&\text{otherwise.}
        \end{cases}
    \] 
    Then 
    $i(G)\le (1-t)\abs{V(G)}+tn_\Delta(G)$.
\end{theorem}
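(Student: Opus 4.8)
The plan is to prove Theorem~\ref{thm:main} by induction on $\abs{V(G)}+\abs{E(G)}$, as announced in the introduction. The quantities $i$, $\abs{V(\cdot)}$ and $n_\Delta(\cdot)$ are all additive over connected components, so if $G$ is disconnected the bound follows by applying the induction hypothesis to each component and summing; this additivity is the whole reason the term $t\,n_\Delta(G)$ appears in the statement. Hence I may assume $G$ is connected, so that $n_\Delta(G)\in\{0,1\}$. If $G$ is $\Delta$-special, Proposition~\ref{prop:special} gives $i(G)=(1-t_0)(\abs{V(G)}-1)+1$ with $t_0=\frac{\Delta}{\fl{\Delta^2/4}+\Delta}$, and since $n_\Delta(G)=1$ the desired inequality reads $(1-t_0)(n-1)+1\le(1-t)n+t$, which for $\Delta\neq5$ is an equality ($t=t_0$) and for $\Delta=5$ rearranges to $n\ge1$. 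So from now on $G$ is connected and not $\Delta$-special, and the goal is the single inequality $i(G)\le(1-t)\abs{V(G)}$. If $\Delta(G)\le1$ then $G=K_2$ (as $K_1$ is $\Delta$-special) and $i(K_2)=1\le2(1-t)$ because $t\le\tfrac12$; so $G$ also has a vertex of degree at least $2$.

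Two reduction moves drive the induction. The first: for any vertex $v$, adjoining $v$ to a minimum independent dominating set of $G-N_G[v]$ shows $i(G)\le i(G-N_G[v])+1$, so the induction hypothesis applied to $G-N_G[v]$ gives
\[
i(G)\le(1-t)\bigl(\abs{V(G)}-1-d_G(v)\bigr)+t\,n_\Delta\bigl(G-N_G[v]\bigr)+1,
\]
and a short computation shows this is at most $(1-t)\abs{V(G)}$ precisely when $\frac{n_\Delta(G-N_G[v])+1}{d_G(v)}\le\frac{1-t}{t}$, where $\frac{1-t}{t}=\frac{\fl{\Delta^2/4}}{\Delta}$ if $\Delta\neq5$ and $\frac{1-t}{t}=\frac54$ if $\Delta=5$. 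The second move: Lemma~\ref{lem:deg} lets me delete all but one edge at a vertex of degree at least $2$ without increasing $i$, and the induction hypothesis applied to the resulting graph $H$ gives $i(G)\le i(H)\le(1-t)\abs{V(G)}+t\,n_\Delta(H)$, which finishes once $n_\Delta(H)=0$ (or, failing that, once one extracts the extra ``$-1$'' available in the proof of Lemma~\ref{lem:deg}).

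The argument then splits on the pendant structure of $G$. Suppose first that every vertex of degree at least $2$ has a neighbor of degree $1$, and let $d_1$ be the largest number of degree-$1$ neighbors of a single vertex. If this number is not the same for all vertices of degree at least $2$, Lemma~\ref{lem:select} yields a vertex $v$ with $\frac{n_0(G-N_G[v])+1}{d_G(v)}\le\frac{\fl{\Delta^2/4}}{\Delta}$, and I apply the first move. If the number is uniformly $d_1$ but some vertex of degree at least $2$ has degree below $\Delta$ (or $d_1\notin\{\ce{\Delta/2},\fl{\Delta/2}\}$), Lemma~\ref{lem:select2} supplies such a $v$ in the same way --- the lone exception $\Delta=5$, $d_G(v)=4$, $d_1=2$ being precisely the reason $\Delta=5$ carries the weaker $t=\tfrac49$, since there $\frac{n_0(G-N_G[v])+1}{d_G(v)}=\frac54$ equals $\frac{1-t}{t}$ only for $t=\tfrac49$. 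If instead every vertex of degree at least $2$ has degree exactly $\Delta$ and exactly $d_1$ degree-$1$ neighbors, then $G-X$ ($X$ being the set of degree-$1$ vertices) is $(\Delta-d_1)$-regular, and Lemma~\ref{lem:all-d1} gives the bound unless $G-X$ is $K_{\Delta-d_1+1}$ or an odd cycle; in those subcases $G$ is either $\Delta$-special (done) or a complete graph or odd cycle with pendant leaves, where $i(G)$ admits a closed form and $i(G)\le(1-t)\abs{V(G)}$ follows by direct arithmetic. The remaining case is that some vertex of degree at least $2$ has no neighbor of degree $1$; here I apply the second move to such a vertex, chosen carefully.

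I expect the main obstacle to be reconciling the two moves with the precise bookkeeping of $\Delta$-special components. For the first move, Lemmas~\ref{lem:select} and~\ref{lem:select2} bound only $n_0(G-N_G[v])$, the number of \emph{isolated} vertices --- i.e.\ the trivial $\Delta$-special components --- of $G-N_G[v]$, whereas the target inequality involves $n_\Delta(G-N_G[v])$, which also counts non-trivial $\Delta$-special components. The way I expect to close this gap is to dispose first of the configurations in which $G$ has a ``pendant special branch'': a cut vertex $z$ one of whose sides $F$ makes $F\cup\{z\}$ a non-trivial $\Delta$-special graph with $z$ lying on no cycle of it. Such configurations can be split along $z$ via Lemma~\ref{lem:1sum} and handled by induction on the two pieces, exactly as in the proof of Proposition~\ref{prop:special}. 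Once they are excluded, any non-trivial $\Delta$-special component $C$ of $G-N_G[v]$ must be attached to $N_G[v]$ only at non-cycle vertices of $C$ (a cycle vertex of $C$ already has degree $\Delta$), and chasing these attachment points back should reproduce precisely such a peelable branch --- a contradiction --- so that $n_\Delta(G-N_G[v])=n_0(G-N_G[v])$ after all. The analogous obstacle for the second move is to choose the degree-$\ge2$ vertex with no degree-$1$ neighbor so that the reduced graph $H$ has no $\Delta$-special component, a $\Delta$-special component of $H$ again forcing a peelable branch or contradicting that $G$ is not $\Delta$-special. The remaining difficulties are comparatively routine: the $\Delta=5$ exception above, the complete-graph-with-leaves and odd-cycle-with-leaves computations, and verifying in each branch that the slack required in the numerical inequalities is indeed available.
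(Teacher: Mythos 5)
Your overall architecture matches the paper's: induction on $\abs{V(G)}+\abs{E(G)}$, reduction to a connected non-special graph, the two moves $i(G)\le i(G-N_G[v])+1$ and Lemma~\ref{lem:deg}, and the case analysis via Lemmas~\ref{lem:select}, \ref{lem:select2}, \ref{lem:all-d1} (your observation that the $\Delta=5$ exception of Lemma~\ref{lem:select2} gives exactly $\frac{n_0+1}{d_G(v)}=\frac54=\frac{1-t}{t}$ is correct and is exactly why $t=\frac49$ suffices there). The genuine gap is in the one step that is the heart of the paper's proof: controlling non-trivial special components created by the two moves. Your proposed mechanism --- first peel off ``pendant special branches'' at cut vertices via Lemma~\ref{lem:1sum}, then claim that any non-trivial special component of $G-N_G[v]$ or of the edge-reduced graph $H$ would force such a branch or force $G$ to be special --- is false as stated. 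A special component can be attached to the rest of $G$ at two or more of its non-cycle (leaf) vertices, and then no peelable branch exists: for instance ($\Delta=6$), take $K_4$ with three pendant leaves on each clique vertex, let $w$ be adjacent to two of those leaves $x_1,x_2$ (now of degree $2$) and to one further vertex $z$ of a non-special tail. Here $G$ is not special, has no cut vertex $z'$ with a side forming a non-trivial special graph (the only candidate, $w$, lies on the cycle $wx_1abx_2w$ of the would-be branch, which therefore violates condition (i)), yet $w$ has degree $\ge 2$ with no degree-$1$ neighbours, and if you apply Lemma~\ref{lem:deg} at $w$ keeping the edge $wz$, the reduced graph has the non-trivial special component $K_4$ plus twelve leaves. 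So ``chosen carefully'' is doing all the work: one must prove that \emph{some} choice of retained edge (possibly after a secondary deletion) avoids special components, and that if every choice fails then $G$ itself is special. That is precisely the content and the case analysis of the paper's Claim~\ref{clm:degree1} (trying all $k$ graphs $H_j$, distinguishing whether the bad vertex is $v$ or some $u_i$, performing a second edge-deletion at $u_i$, and concluding that $G$ is a gluing of special graphs), and your proposal supplies no substitute for it.

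Two further, smaller points. First, for the move $i(G)\le i(G-N_G[v])+1$ your pendant-branch chase is also not the right repair: in the regime where that move is used, every vertex of degree at least $2$ has a degree-$1$ neighbour, whereas an attachment vertex of a non-trivial special component of $G-N_G[v]$ has degree at least $2$ and \emph{no} degree-$1$ neighbour in $G$ (its neighbours inside the component are cycle vertices of degree $\Delta$, its other neighbours lie in $N_G(v)$); the paper exploits exactly this by proving Claim~\ref{clm:degree1} first and then quoting it, rather than by excluding cut-vertex configurations. Second, your use of Lemma~\ref{lem:1sum} to split off a pendant special branch needs the hypothesis that the special piece $G_2$ has a minimum independent dominating set containing the cut vertex $z$ whose removal still dominates $G_2-z$; in Proposition~\ref{prop:special} this is only established when $z$ has degree $1$ in $G_2$, while your $z$ may have several neighbours in the branch, so this would require an additional (true, but unproven) structural lemma about special graphs.
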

\begin{proof}
    We proceed by induction on $\abs{V(G)}+\abs{E(G)}$.
    By the induction hypothesis, we may assume that $G$ is connected.
    It is true if $G$ is special by Proposition~\ref{prop:special} and therefore we may assume that 
    $G$ is not special.
    Thus $n_\Delta(G)=0$.
    Note that if $\Delta=5$, then $\frac{\Delta}{\fl{\Delta^2/4}+\Delta}>\frac{4}{9}$.
    \begin{claim}\label{clm:degree1}
        If $G$ has a vertex of degree at least~$2$ 
        with no neighbors of degree~$1$, 
        then $i(G)\le (1-t)\abs{V(G)}$.
    \end{claim}
    \begin{subproof}
        Suppose that $G$ has a vertex $v$ of degree at least $2$
        with no neighbors of degree~$1$.
        Let $u_1$, $u_2$, $\ldots$, $u_k$ be the neighbors of~$v$. 
        For $j\in[k]$, 
        let $H_j:=G-\{v u_i : i \in [k], ~ i\neq j\}$.
        By Lemma~\ref{lem:deg}, $i(G)\le i(H_j)$.
        Since $d_G(u_i)>1$ for all $i\in [k]$, 
        $n_0(H_j)=0$ for all $j\in [k]$.
        Thus if $H_j$ has no non-trivial special component for some $j\in [k]$, then $i(G)\le i(H_j)\le (1-t)\abs{V(G)}$ by the induction hypothesis.
        Thus we may assume that $H_j$ has a non-trivial special component $C_j$ for all $j\in [k]$.
        Since $C_j$ is not a component of~$G$, 
        $C_j$ contains a vertex $x_j$ incident with a deleted edge in $G$.
        Since $x_j$ has degree less than $\Delta$ in $C_j$, 
        $C_j$ has no cycle containing $x_j$ by the definition of a special graph. By the definition of a special graph, 
        every neighbor of $x_j$ in $C_j$ belongs to some cycle of $C_j$.
    
        Suppose $x_j=u_i$ for some $j\in [k]$ and $i \in [k]\setminus\{j\}$.
        Let $F_j$ be the set of edges of $G-u_iv$ incident with $u_i$.
        As $u_i$ has degree at least $2$ in $G$, $F_j$ is nonempty.
        By Lemma~\ref{lem:deg}, $i(G)\le i(G-F_j)$. 
        By the induction hypothesis, we may assume that $G-F_j$ contains a non-trivial special component $C'_j$.        
        Since $C'_j$ is not a component of~$G$, there is a vertex $y_j$ of $C'_j$ that is incident with some edge in $F_j$.
        Since the degree of $y_j$ in $G-F_j$ is less than $\Delta$, 
        $y_j$ does not belong to any cycle of $C'_j$
        and therefore $y_j=x_j$.
        However in this case, $G=C_j\cup C_j'$, $u_i$ is the unique vertex in $C_j\cap C_j'$, and $u_i$ does not belong to any cycle in $C_j$ or $C_j'$. This implies that $G$ is special, contradicting the assumption.

     Thus $x_j=v$ and $V(C_j)\cap \{u_1,\ldots,u_{k}\}=\{u_j\}$ for all $j\in [k]$.
    Then $G$ is obtained from the disjoint union of $C_1$, $\ldots$, $C_k$ by identifying one vertex from each component that does not belong to a cycle to become $v$. 
    This implies that $G$ is special, contradicting the assumption. 
    \end{subproof}

    Suppose that $G-N_G[v]$ has a non-trivial special component~$C$ for some vertex $v$.
    Since $G$ is connected, $C$ has a vertex~$x$ adjacent to some neighbors of~$v$
    and therefore the degree of $x$ in $C$ is less than $\Delta$.
    By the definition of a special graph, 
    all neighbors of $x$ in $C$ have degree $\Delta$
    and therefore all neighbors of~$x$ in~$G$ have degree at least $2$ in $G$, implying 
    that $i(G)\le (1-t)\abs{V(G)}$ by  
    Claim~\ref{clm:degree1}.
    Therefore, we may assume that $G-N_G[v]$  has no non-trivial special component for every vertex $v$, that is, $n_{\Delta}(G-N_G[v])=n_0(G-N_G[v])$.

    Let $d_1$ be the maximum number of degree-$1$ neighbors of a vertex of $G$.
    By Claim~\ref{clm:degree1}, we may assume that $G$ has a vertex of degree $1$
    and therefore $d_1>0$.
    Suppose that there is a vertex of degree at least $2$ having less than $d_1$ neighbors of degree~$1$.
    Let $v$ be the vertex chosen by Lemma~\ref{lem:select}.
    By the induction hypothesis, we have
    \begin{align*}
        i(G)
        &\le i(G-N_G[v])+1 \\
        &    
        \leq (1-t)(\abs{V(G)}-d_G(v)-1)+t n_0(G-N_G[v])+1\\
        &=(1-t)\abs{V(G)} -(1-t)d_G(v)+tn_0(G-N_G[v])+t \\
        &\le (1-t)\abs{V(G)} -d_G(v)+ t \frac{\lfloor \Delta^2/4\rfloor+\Delta}{\Delta} d_G(v)
        &\text{by Lemma~\ref{lem:select}}\\
        &\le (1-t)\abs{V(G)}.
    \end{align*}

    Thus we may assume that every vertex of degree at least $2$ has exactly $d_1$ neighbors of degree~$1$.
    Let $X$ be the set of all vertices of degree~$1$.
    Since $G$ is connected, $G-X$ is connected.
    Note that $\abs{V(G-X)}=\frac{1}{d_1+1}\abs{V(G)}$.

   If $\Delta=5$, $d_1=2$, and $G$ has a vertex of degree~$2$, 
   then by Lemma~\ref{lem:all-d1}, $i(G)\le \frac{5}{9}\abs{V(G)}=(1-t)\abs{V(G)}$.
   Hence, we may assume that $\Delta\neq 5$, or $d_1\neq 2$, or $G$ has no vertex of degree~$2$.

    Let $v$ be a vertex of degree at least~$2$.    
    If $d_1\notin \{\lceil \Delta/2\rceil,\lfloor\Delta/2\rfloor\}$
    or $d_G(v)<\Delta$,
    then 
    by Lemma~\ref{lem:select2}, 
    $\frac{n_0(G-N_G[v])+1}{d_G(v)}\le \frac{\lfloor \Delta^2/4\rfloor}{\Delta}$, which implies that $i(G)\le (1-t)\abs{V(G)}$ as in the previous computation.
    Therefore, we may assume that $d_1\in\{\lceil\Delta/2\rceil,\lfloor\Delta/2\rfloor\}$
    and every vertex of degree at least~$2$ has degree exactly~$\Delta$.

    Then $G-X$ is $(\Delta-d_1)$-regular.
    Since $G$ is not special, $G-X$ is not a complete graph.

    If $G-X$ is not an odd cycle, then by Lemma~\ref{lem:all-d1}, \[ i(G)\le \left(1-\frac{\Delta-1}{(d_1+1)(\Delta-d_1)}\right)\abs{V(G)}.\] 
    If $\Delta\ge 6$, then 
    \begin{align*} 
        \frac{1}{t}-\frac{(d_1+1)(\Delta-d_1)}{\Delta-1}
        &\ge  \frac{(\Delta^2-1)/4+\Delta}{\Delta}-
        \frac{(\Delta+1)^2/4}{\Delta-1}
        \\ 
        &=\frac{
        \left( 
            (\Delta^3+3\Delta^2-5\Delta+1)-
            (\Delta^3+2\Delta^2+\Delta) 
        \right)}
        {4\Delta(\Delta-1)}\\
        &=\frac{ (\Delta^2-6\Delta+1)}{4\Delta(\Delta-1)} >0 
    \end{align*}
    and therefore 
    \[ i(G)\le \left(1-\frac{\Delta-1}{(d_1+1)(\Delta-d_1)}\right)\abs{V(G)}\le (1-t)\abs{V(G)}.\]
    If $\Delta=5$, then $d_1\in\{2,3\}$ and so 
    $\frac{\Delta-1}{(d_1+1)(\Delta-d_1)}\ge \frac{4}{9}=t$.
    If $\Delta=4$, then $d_1=2$ and 
    $\frac{\Delta-1}{(d_1+1)(\Delta-d_1)}=\frac{1}{2}=t$.

    Now we may assume that $G-X$ is an odd cycle. 
    Since $G$ is not special, 
    $\Delta=5$.
    Since $G-X$ is $(\Delta-d_1)$-regular, we have 
    $d_1=3$.
    As $G-X$ is an odd cycle, $G-X$ has an independent set $S$ of size $(\abs{V(G-X)}-1)/2 = \frac{\abs{V(G)}}{2(d_1+1)}-\frac12$.
    As in the proof of Lemma~\ref{lem:all-d1}, 
    $S\cup N_G(V(G)-X-S)$ is an independent dominating set
    and so $i(G)\le 
    \abs{S}+d_1\abs{V(G)-X-S}=\frac{\abs{V(G)}}{2(d_1+1)}-\frac12+d_1 (\frac{\abs{V(G)}}{2(d_1+1)}+\frac12)
    = \abs{V(G)}/2 + \frac{d_1-1}{2}=\abs{V(G)}/2+1$.
    Since $G$ is not special, the length of $G-X$ is at least $5$ and $\abs{V(G)}\ge 5(d_1+1)=20$, and therefore 
    $\abs{V(G)}/2+1\le \frac{5}{9}\abs{V(G)}$.
\end{proof}

\section*{Acknowledgement}

We thank Ilkyoo Choi for helpful discussions.

\bibliographystyle{amsplain}
\bibliography{ref} % see ref.bib for bibliography management

\providecommand{\bysame}{\leavevmode\hbox to3em{\hrulefill}\thinspace}
\providecommand{\MR}{\relax\ifhmode\unskip\space\fi MR }
% \MRhref is called by the amsart/book/proc definition of \MR.
\providecommand{\MRhref}[2]{%
  \href{http://www.ams.org/mathscinet-getitem?mr=#1}{#2}
}
\providecommand{\href}[2]{#2}
\begin{thebibliography}{1}

\bibitem{AADHHN2020}
Amirreza Akbari, Saieed Akbari, Ali Doosthosseini, Zahra Hadizadeh, Michael~A.
  Henning, and Aria Naraghi, \emph{Independent domination in subcubic graphs},
  J. Comb. Optim. \textbf{43} (2022), no.~1, 28--41. \MR{4372885}

\bibitem{Berge1962}
Claude Berge, \emph{The theory of graphs and its applications}, Methuen \& Co.
  Ltd., London; John Wiley \& Sons Inc., New York, 1962, Translated by Alison
  Doig. \MR{0132541}

\bibitem{blidia2006extremal}
Mostafa Blidia, Mustapha Chellali, and Fr\'{e}d\'{e}ric Maffray, \emph{Extremal
  graphs for a new upper bound on domination parameters in graphs}, Discrete
  Math. \textbf{306} (2006), no.~19-20, 2314--2326. \MR{2261904}

\bibitem{cho2021independent}
Eun-Kyung Cho, Ilkyoo Choi, and Boram Park, \emph{On independent domination of
  regular graphs}, arXiv:2107.00295, 2021.

\bibitem{DDM1997}
Gayla~S. Domke, Jean~E. Dunbar, and Lisa~R. Markus, \emph{Gallai-type theorems
  and domination parameters}, Discrete Math. \textbf{167/168} (1997), 237--248,
  15th British Combinatorial Conference (Stirling, 1995). \MR{1446748}

\bibitem{HHS1998}
Teresa~W. Haynes, Stephen~T. Hedetniemi, and Peter~J. Slater,
  \emph{Fundamentals of domination in graphs}, Monographs and Textbooks in Pure
  and Applied Mathematics, vol. 208, Marcel Dekker, Inc., New York, 1998.
  \MR{1605684 (2001a:05112)}

\bibitem{ore1962theory}
Oystein Ore, \emph{Theory of graphs}, American Mathematical Society Colloquium
  Publications, Vol. XXXVIII, American Mathematical Society, Providence, R.I.,
  1962. \MR{0150753}

\end{thebibliography}

\end{document}